\documentclass{cmslatex}
\usepackage[paperwidth=7in, paperheight=10in, margin=.875in]{geometry}
\usepackage[colorlinks=true, linkcolor=red, citecolor=blue]{hyperref}
\usepackage{amsfonts,amssymb}
\usepackage{amsmath}
\usepackage{graphicx}
\usepackage{cite}
\usepackage{enumerate}
\sloppy

\thinmuskip = 0.5\thinmuskip \medmuskip = 0.5\medmuskip
\thickmuskip = 0.5\thickmuskip \arraycolsep = 0.3\arraycolsep

   \allowdisplaybreaks
\begin{document}
\title{Boundary controllability of the Korteweg-de Vries equation: The Neumann case
}


          \author{Roberto de A. Capistrano--Filho \thanks{Departamento de Matem\'atica,  Universidade Federal de Pernambuco (UFPE), 50740-545, Recife (PE), Brazil, (\href{mailto:roberto.capistranofilho@ufpe.br}{roberto.capistranofilho@ufpe.br}).}
          \and Jandeilson Santos da Silva \thanks{Departamento de Matem\'atica,  Universidade Federal de Pernambuco (UFPE), 50740-545, Recife (PE), Brazil (\href{mailto:jandeilson.santos@ufpe.br}{jandeilson.santos@ufpe.br}}).}

         \pagestyle{myheadings} \markboth{CRITICAL SET PHENOMENON FOR THE KDV EQUATION}{CAPISTRANO-FILHO AND DA SILVA} \maketitle

          \begin{abstract}
This article gives a necessary first step to understanding the critical set phenomenon for the Korteweg-de Vries (KdV) equation posed on the interval $[0,L]$, considering the Neumann boundary conditions with only one control input. We showed that the KdV equation is controllable in the critical case, i.e., when the spatial domain $L$ belongs to the set $\mathcal{R}_c$, where $c\neq-1$ and 
$$
 \mathcal{R}_c:=\left\{\frac{2\pi}{\sqrt{3(c+1)}}\sqrt{m^2+ml+m^2};\ m,l\in \mathbb{N}^*\right\}\cup\left\{\frac{m\pi}{\sqrt{c+1}};\ m\in \mathbb{N}^*\right\},
$$
the KdV equation is exactly controllable in $L^2(0,L)$. The result is achieved using the \textit{return method} together with a fixed point argument.
          \end{abstract}
\begin{keywords}  Korteweg–de Vries equation, exact boundary controllability, Neumann boundary conditions, Dirichlet boundary conditions, critical set
\end{keywords}

 \begin{AMS} Primary, 35Q53, 93B05; Secondary, 37K10
\end{AMS}
          \section{Introduction}\label{intro}
          
We had known when we formulated the waves as a free boundary problem of the incompressible, irrotational Euler equation in an appropriate non-dimensional form, there exist two non-dimensional parameters $\delta:= \frac{h}{\lambda}$ and $\varepsilon:= \frac{a}{h}$, where the water depth, the wavelength and the amplitude of the free surface are parameterized as $h, \lambda$ and $a$, respectively. See, for instance, \cite{BCS, BCL, ASL, BLS, Lannes, Saut} and references therein for a rigorous justification. Moreover, another non-dimensional parameter $\mu$ appears, the Bond number, to measure the importance of gravitational forces compared to surface tension forces. Considering the physical condition $\delta \ll 1$, we can characterize the waves, called long waves or shallow water waves.  In particular, considering the relations between $\varepsilon$ and $\delta$, we can have the following regime:
\begin{itemize}
\item Korteweg-de Vries (KdV): $\varepsilon = \delta^2 \ll 1$ and $\mu \neq \frac13$.
Under this regime, Korteweg and de Vries \cite{Korteweg}\footnote{This equation was first introduced by Boussinesq \cite{Boussinesq}, and Korteweg and de Vries rediscovered it twenty years later.} derived the following  well-known equation as a central equation among other dispersive or shallow water wave models called the KdV equation from the equations for capillary-gravity waves: 
\[\pm2 \eta_t + 3\eta\eta_x +\left( \frac13 - \mu\right)\eta_{xxx} = 0.\]
\end{itemize}

Today, it is well known that this equation has an important phenomenon that directly affects the control problem related to it, the so-called \textit{critical length phenomenon}. Let us briefly present the control problem, which makes the phenomenon of critical lengths emerge. The control problem was presented in a pioneering work of Rosier \cite{Rosier} that studied the following system 
\begin{equation}
\left\{
\begin{array}
[c]{lll}%
u_t+u_x+uu_x+u_{xxx}=0, &  & \text{ in } (0,L)\times(0,T),\\
u(0,t)=0,\text{ }u(L,t)=0,\text{ }u_x(L,t)=g(t) ,&  & \text{ in }(0,T),\\
u(x,0)=u_0(x), & & \text{ in }(0,L),
\end{array}
\right.  \label{2}
\end{equation}
where the boundary value function $g(t)$ is considered as a control input. Precisely, the author showed the following control problem for the system \eqref{2}, giving the origin of the critical length phenomenon for the KdV equation.

\vspace{0.2cm}
\noindent\textbf{Question $\mathcal{A}$:} \textit{Given $T>0$ and $u_0,u_T\in L^2(0,L)$, can one find  an appropriate control input $g(t)\in L^2(0,T)$ such that the corresponding solution $u(x,t)$ of the system \eqref{2} satisfies
\begin{equation}\label{control-r}
u(x,0)=u_0(x) \quad \text{and} \quad u(x,T)=u_T(x)?
\end{equation}}

Rosier answered the previous question in \cite{Rosier}. He proved that considering $L\notin\mathcal{N}$, where 
$$
\mathcal{N}:=\left\{  \frac{2\pi}{\sqrt{3}}\sqrt{k^{2}+kl+l^{2}}
\,:k,\,l\,\in\mathbb{N}^{\ast}\right\},
$$
the associated linear system \eqref{2} 
\begin{equation}
\left\{
\begin{array}
[c]{lll}%
u_t+u_x+u_{xxx}=0, &  & \text{ in } (0,L)\times(0,T),\\
u(0,t)=0,\text{ }u(L,t)=0,\text{ }u_x(L,t)=g(t), &  & \text{ in }(0,T),\\
u(x,0)=u_0(x) & & \text{ in }(0,L),
\end{array}
\right.  \label{2a}
\end{equation}
is controllable; roughly speaking, if $L\in\mathcal{N}$ system \eqref{2a} is not controllable, that is,  there exists a finite-dimensional subspace of $L^2(0,L)$, denoted by $\mathcal{M}=\mathcal{M}(L)$, which is unreachable from $0$ for the linear system.  More precisely, for every nonzero state $\psi\in\mathcal{M}$, $g\in L^2(0,T)$  and $u\in C([0,T];L^2(0,L))\cap L^2(0,T;H^1(0,L))$ satisfying \eqref{2a} and $u(\cdot,0)=0$, one has $u(\cdot,T)\neq\psi$. 

\vspace{0.2cm}
\begin{definition}
A spatial domain $(0,L)$ is called \textit{critical}  for the system \eqref{2a} if its domain length $L$ belongs to $\mathcal{N}$.
\end{definition}
\vspace{0.2cm}

Following the work of Rosier \cite{Rosier}, the boundary control system of the KdV equation posed on the finite interval $(0,L)$ with various control inputs has been intensively studied  (cf.  \cite{CaCaZh,cerpa1,cerpa2,CoCre,crepeau,GG,GG1,Gui}  and see \cite{cerpatut, RZsurvey} for more complete reviews). Thus, this work gives a necessary step to understanding this phenomenon for the KdV equation with Neumann boundary conditions, completing, in some sense, the results shown in \cite{CaCaZh}. 

\subsection{Problem set} 

In this article, we study a class of distributed parameter control systems described by the Korteweg–de Vries (KdV) equation posed on a bounded domain $(0, L)$ with the Neumann boundary conditions
\begin{equation}\label{NKdV-2}
\left\{
\begin{array}
[c]{lll}%
	u_t+u_x+u_{xxx}+uu_x=0,&&\text{ in }(0,L)\times (0,T),\\
		u_{xx}(0,t)=u_{xx}(L,t)=0,&&\text{ in }(0,T),\\
		u_x(L,t)=h(t),&&\text{ in }(0,T),\\
		u(x,0)=u_0(x),&&\text{ in }(0,L),
	\end{array}
	\right.
\end{equation}
where $h(t)$ will be considered as a control input. Recently, the first author dealt with the control problem related to the system \eqref{NKdV-2}. Precisely, it was proved that their solutions are exactly controllable in a neighborhood of $c$ if the length $L$ of the spatial domain $(0, L)$ does not belong to the set
$$
\mathcal{R}_c:=\left\{\frac{2\pi}{\sqrt{3(c+1)}}\sqrt{m^2+ml+m^2};\ m,l\in \mathbb{N}^*\right\}\cup\left\{\frac{m\pi}{\sqrt{c+1}};\ m\in \mathbb{N}^*\right\},
$$
that is, the relation \eqref{control-r} holds for the solution of the system \eqref{NKdV-2}. The result can be read as follows.

\vspace{0.2cm}
\begin{theorem}[Caicedo, Capistrano--Filho, Zhang \cite{CaCaZh}]\label{control}
Let $T>0$, $c\ne -1$ and $L \notin \mathcal{R}_c$. There exists $\delta>0$ such that for any $u_0,u_T\in L^2(0,L)$ with
$$
\|u_0-c\|_{L^2(0,L)}<\delta \text{ \ and \ }
\|u_T-c\|_{L^2(0,L)}<\delta
$$
one can find $h \in L^2(0,T)$ such that the system \eqref{NKdV-2} admits a unique solution
$$
u \in \mathcal{Z}_T=C\left([0,T];L^2(0,L)\right)\cap L^2\left(0,T;H^1(0,L)\right)
$$
satisfying \eqref{control-r}.
\end{theorem}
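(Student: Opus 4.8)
The plan is to reduce \eqref{NKdV-2} to a local controllability problem near the origin, solve the linearized problem by duality, and then close the nonlinear problem by a contraction argument; the set $\mathcal{R}_c$ will appear exactly as the obstruction to the observability inequality underlying the linear step. First I would eliminate the constant target by setting $v=u-c$. Since every derivative annihilates the constant and $uu_x=(c+v)v_x$, the function $v$ solves $v_t+(1+c)v_x+v_{xxx}+vv_x=0$ with $v_{xx}(0,\cdot)=v_{xx}(L,\cdot)=0$, $v_x(L,\cdot)=h$, $v(\cdot,0)=u_0-c$, and the target becomes $v(\cdot,T)=u_T-c$. The transport speed is now $1+c$, which is nonzero precisely because $c\neq-1$; a change of variables normalizing this coefficient sends the problem to a unit-speed KdV equation on an interval of rescaled length, under which the critical lengths of the normalized problem pull back to exactly $\mathcal{R}_c$ (this is the origin of the $\sqrt{c+1}$ factors). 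It therefore suffices to prove local exact controllability near $0$ for the shifted equation when $L\notin\mathcal{R}_c$.

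Next I would treat the quadratic term $vv_x=\tfrac12(v^2)_x$ as a forcing $f$ and establish exact controllability in $L^2(0,L)$ for the linear inhomogeneous system $v_t+(1+c)v_x+v_{xxx}=f$ with the same boundary conditions. Well-posedness in $\mathcal{Z}_T$ together with the hidden (Kato smoothing) boundary estimates makes the traces meaningful and supplies the bounds needed later. By the Hilbert Uniqueness Method, controllability of the homogeneous linear system is equivalent to an observability inequality for the backward adjoint system, which I would prove in two stages: a multiplier identity (using multipliers of the type $\varphi$, $x\varphi$, and $(T-t)\varphi$) yielding observability modulo a compact lower-order term, followed by a compactness--uniqueness argument to absorb that term.

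The main obstacle, and the step where $\mathcal{R}_c$ is born, is the stationary unique-continuation problem generated by the compactness--uniqueness argument: one must rule out nonzero eigenpairs $(\lambda,\varphi)$ with $\varphi\in H^3(0,L)$ solving $\varphi'''+(1+c)\varphi'=\lambda\varphi$, the adjoint boundary conditions, and the observed boundary trace forced to vanish. Analyzing the characteristic cubic (whose three roots sum to zero) and imposing the boundary relations reduces the existence of such $\varphi$ to the vanishing of a boundary determinant; this occurs precisely when $L$ belongs to one of the two families in $\mathcal{R}_c$, the $\tfrac{2\pi}{\sqrt{3(c+1)}}\sqrt{m^2+ml+l^2}$ family being the KdV-type obstruction inherited from Rosier's analysis \cite{Rosier} and the $\tfrac{m\pi}{\sqrt{c+1}}$ family arising from the Neumann structure of the boundary conditions. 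For $L\notin\mathcal{R}_c$ the determinant is nonzero, no nontrivial eigenfunction exists, and the observability inequality---hence exact controllability of the linear system in $L^2(0,L)$---follows.

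Finally I would upgrade to the nonlinear statement by a fixed point. Using the bounded control operator supplied by the linear step, define $\Gamma:\mathcal{Z}_T\to\mathcal{Z}_T$ sending $w$ to the solution $v$ of the linear controllability problem with forcing $-\tfrac12(w^2)_x$ and boundary control chosen so that $v$ steers $u_0-c$ to $u_T-c$. Because the nonlinearity is quadratic, the bilinear estimate $\|(w^2)_x\|\lesssim\|w\|_{\mathcal{Z}_T}^2$ shows that $\Gamma$ maps a small ball of $\mathcal{Z}_T$ into itself and is a contraction provided $\delta$ is small enough; its unique fixed point $v$ yields $u=c+v$, the associated control $h\in L^2(0,T)$, and uniqueness in $\mathcal{Z}_T$. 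I expect the genuinely hard part to be the spectral/observability analysis producing $\mathcal{R}_c$; the bilinear bookkeeping, while delicate, is routine for KdV.
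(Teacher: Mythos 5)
This theorem is quoted from \cite{CaCaZh} and the present paper does not reprove it; the strategy it relies on there is exactly the one you describe, namely exact controllability of the linearized system around $c$ for $L\notin\mathcal{R}_c$ (obtained by duality, multiplier estimates, compactness--uniqueness, and the spectral analysis whose degenerate cases produce the two families in $\mathcal{R}_c$), followed by a contraction/fixed-point argument treating $vv_x$ as a small forcing. Your proposal is correct and follows essentially the same route; the only minor caveat is that the rescaling normalizing the transport coefficient presupposes $1+c>0$, whereas for $c<-1$ the set $\mathcal{R}_c$ is simply empty and the spectral analysis must be carried out directly.
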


\vspace{0.2cm}
As in \cite{Rosier}, the first step is to obtain a control result for the linear system, namely, 
\begin{equation}\label{lpc}
\left\{
\begin{array}
[c]{lll}%
u_t+(1+c)u_x+u_{xxx}=0,&\text{ \ in \ }(0,L)\times (0,T),\\
u_{xx}(0,t)=u_{xx}(L,t)=0,&\text{ \ in \ }(0,T),\\
u_x(L,t)=h(t),&\text{ \ in \ }(0,T),\\
u(x,0)=u_0(x),&\text{ \ in \ }(0,L).
\end{array}
\right.
\end{equation}
Precisely, the authors in \cite{CaCaZh} proved the following result. 

\vspace{0.2cm}
\begin{theorem}[Caicedo, Capistrano--Filho, Zhang \cite{CaCaZh}]\label{control-c} For $c\ne -1$, the linear system \eqref{lpc} is exactly controllable in the space $L^2(0,L)$ if and only if $L\notin \mathcal{R}_c$. Otherwise, that is, if $c=-1$, the system \eqref{lpc} is not exactly controllable in the space $L^2(0,L)$ for any $L>0$.
\end{theorem}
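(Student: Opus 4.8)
The plan is to use the Hilbert Uniqueness Method: I would reduce the exact controllability of the linear system \eqref{lpc} to an observability inequality for its adjoint, and then settle that inequality by a multiplier estimate combined with a compactness--uniqueness argument, whose uniqueness step is an overdetermined stationary eigenvalue problem from which the set $\mathcal{R}_c$ emerges. Multiplying \eqref{lpc} by a test function $\varphi$ and integrating by parts on $(0,L)\times(0,T)$, the contributions of the uncontrolled boundary data $v(0,t)$, $v(L,t)$, $v_x(0,t)$ cancel exactly when $\varphi$ solves the backward adjoint system
\begin{equation}\label{adjlpc}
\left\{
\begin{array}{ll}
\varphi_t+(1+c)\varphi_x+\varphi_{xxx}=0, & \text{ in }(0,L)\times(0,T),\\
\varphi_x(0,t)=0,\ \ \varphi_{xx}(0,t)+(1+c)\varphi(0,t)=0, & \text{ in }(0,T),\\
\varphi_{xx}(L,t)+(1+c)\varphi(L,t)=0, & \text{ in }(0,T),\\
\varphi(x,T)=\varphi_T(x), & \text{ in }(0,L)
\end{array}
\right.
\end{equation}
leaving only the control/observation pairing
\[
\int_0^L v_T\,\varphi_T\,dx-\int_0^L v_0\,\varphi(\cdot,0)\,dx=\int_0^T h(t)\,\varphi_x(L,t)\,dt .
\]
Thus \eqref{lpc} is exactly controllable in $L^2(0,L)$ if and only if there is $C>0$ with
\begin{equation}\label{obs}
\|\varphi(\cdot,T)\|_{L^2(0,L)}^2\le C\int_0^T|\varphi_x(L,t)|^2\,dt
\end{equation}
for every solution $\varphi$ of \eqref{adjlpc}; note that here the observation is the Neumann trace $\varphi_x(L,t)$.

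To prove \eqref{obs} in the non-critical range $L\notin\mathcal{R}_c$, I would first use the multiplier method (testing \eqref{adjlpc} against $\varphi$ and $x\varphi$) together with the hidden regularity of the trace to obtain a weakened estimate
\[
\|\varphi(\cdot,T)\|_{L^2(0,L)}^2\le C\int_0^T|\varphi_x(L,t)|^2\,dt+C\,\|\varphi\|_{L^2(0,T;L^2(0,L))}^2,
\]
in which the last term is compact with respect to the energy. A standard compactness--uniqueness argument then removes this lower-order term, \emph{provided} the unique continuation property holds: any solution of \eqref{adjlpc} with $\varphi_x(L,t)\equiv0$ must vanish. By the spectral/semigroup structure, this is equivalent to showing that the overdetermined stationary problem
\begin{equation}\label{eig}
\begin{gathered}
\psi'''+(1+c)\psi'=\lambda\psi,\qquad \psi'(0)=\psi'(L)=0,\\
\psi''(0)+(1+c)\psi(0)=0,\qquad \psi''(L)+(1+c)\psi(L)=0
\end{gathered}
\end{equation}
admits no nontrivial solution $(\lambda,\psi)\in\mathbb{C}\times H^3(0,L)$, where the extra condition $\psi'(L)=0$ encodes the vanishing observation.

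The heart of the matter, and the step I expect to be the main obstacle, is the algebraic analysis of \eqref{eig}. Writing the characteristic equation $r^3+(1+c)r-\lambda=0$ with roots $r_1,r_2,r_3$ satisfying $r_1+r_2+r_3=0$ and $r_1r_2+r_1r_3+r_2r_3=1+c$, I would insert $\psi=\sum_j a_je^{r_jx}$ into the four boundary conditions and require the resulting $4\times3$ homogeneous system in $(a_1,a_2,a_3)$ to be rank deficient. For $\lambda=0$ this is explicit: the only candidate eigenfunction is $\psi(x)=\cos(\sqrt{c+1}\,x)$, which satisfies every condition in \eqref{eig}, while $\psi'(L)=-\sqrt{c+1}\,\sin(\sqrt{c+1}\,L)=0$ forces $\sqrt{c+1}\,L\in\pi\mathbb{N}^*$, that is $L=\tfrac{m\pi}{\sqrt{c+1}}$, the second family in $\mathcal{R}_c$. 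For $\lambda\neq0$, the rescaling $x\mapsto x/\sqrt{c+1}$ transforms \eqref{eig} into a Rosier-type overdetermined eigenvalue problem on the interval of length $L\sqrt{c+1}$ associated with $p'''+p'=\tilde\lambda p$; the cube-root-of-unity structure of its characteristic roots then reproduces, exactly as in \cite{Rosier}, the quantization $L\sqrt{c+1}=\tfrac{2\pi}{\sqrt3}\sqrt{m^2+ml+l^2}$, i.e. the first family $L=\tfrac{2\pi}{\sqrt{3(c+1)}}\sqrt{m^2+ml+l^2}$. Hence \eqref{eig} has a nontrivial solution precisely when $L\in\mathcal{R}_c$, which establishes the unique continuation property (and therefore \eqref{obs}) whenever $L\notin\mathcal{R}_c$.

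It remains to address the two negative statements. For the necessity ($c\neq-1$, $L\in\mathcal{R}_c$), the nontrivial solutions of \eqref{eig} constructed above are unobservable adjoint eigenstates with $\varphi_x(L,t)\equiv0$; through the duality identity they span a nontrivial finite-dimensional subspace of $L^2(0,L)$ that is unreachable from $0$, so \eqref{obs} fails and \eqref{lpc} is not controllable. For the degenerate case $c=-1$, the first-order term disappears and a direct computation using the Neumann conditions gives $\frac{d}{dt}\int_0^L v\,dx=-\int_0^L v_{xxx}\,dx=-\big[v_{xx}\big]_0^L=0$, so the mass $\int_0^L v\,dx$ is conserved along the flow independently of the control $h$; consequently no state whose mass differs from that of $v_0$ can be reached, and \eqref{lpc} fails to be exactly controllable for every $L>0$. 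Equivalently, $\varphi\equiv1$ then solves \eqref{adjlpc} with $\varphi_x(L,t)\equiv0$, violating \eqref{obs}. This completes the proposed proof.
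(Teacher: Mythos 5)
Your proposal is sound and follows essentially the route of the source: note that the paper you are reading does not prove Theorem \ref{control-c} at all --- it is imported verbatim from \cite{CaCaZh} --- and the proof given there is exactly the HUM/observability scheme you describe (duality with the backward adjoint system, a multiplier estimate plus compactness--uniqueness, and reduction of unique continuation to the overdetermined eigenvalue problem whose solvability characterizes $\mathcal{R}_c$, with the family $m\pi/\sqrt{c+1}$ arising from $\lambda=0$ and the mass-conservation obstruction handling $c=-1$). The one place where you defer rather than prove is the decisive step for $\lambda\neq 0$: the boundary conditions here differ from Rosier's Dirichlet ones, so the $4\times 3$ rank-deficiency condition leads to a different determinant whose zero set must actually be computed (via the Paley--Wiener/entire-function argument) rather than inferred ``exactly as in \cite{Rosier}''; that computation is the technical core of \cite{CaCaZh}, but your framework around it is correct.
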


\vspace{0.2cm}

With this in hand, they extend the result for the nonlinear system \eqref{NKdV-2} using a fixed point argument, achieving Theorem \ref{control} whenever $L \notin \mathcal{R}_c$.  It is important to point out that fixing $k \in \mathbb{N}^*$ and considering $m=l=k$, we have $L=2k\pi$, when $c=0$, and, from Theorem \ref{control-c}, it follows that \eqref{lpc} are not exactly controllable. Additionally, as mentioned before, we do not know if the system 
\begin{align}\label{NKdVN}
	\left\{
	\begin{array}{lll}
		u_t+(1+c)u_x+u_{xxx}+uu_x=0,&\text{ in }(0,L)\times (0,T),\\
		u_{xx}(0,t)=0,\ u_x(L,t)=h(t),\ u_{xx}(L,t)=0,&\text{ in }(0,T),\\
		u(x,0)=u_0(x),&\text{ in }(0,L),
	\end{array}
	\right.
\end{align}
is exactly controllable. So, in this context, the natural questions appear:

\vspace{0.2cm}
\noindent\textbf{Question $\mathcal{B}$:} Given $T>0$, $L \in \mathcal{R}_c$ and $u_0,u_T \in L^2(0,L)$ close enough to $c$, can one find an appropriate control input $h\in L^2(0,T)$ such that the solution $u$ of the system \eqref{NKdVN}, corresponding to $h$ and $u_0$, satisfies $u(\cdot,T)=u_T$? 
\vspace{0.1cm}

\vspace{0.2cm}
\noindent\textbf{Question $\mathcal{C}$:} Given $T>0$, $L \in \mathcal{R}_c$ and $u_0,u_T \in L^2(0,L)$. The system \eqref{NKdVN} is exactly controllable in the critical length $L$, that is, when $L \in \mathcal{R}_c$?
\vspace{0.1cm}

\subsection{Main results} 

Let us consider the nonlinear control system \eqref{NKdVN}  with $h$ as a control input and $u$ as the state. Theorem \ref{control} says that when $L\notin \mathcal{R}_c$, system \eqref{NKdVN} is locally controllable around $c$, but we do not know if the same holds when $L\in \mathcal{R}_c$.  The main result in this work provides an affirmative answer to the Question $\mathcal{C}$. Precisely, we have the following:

\vspace{0.2cm}

\begin{theorem}\label{c-length}
	Let $T>0$, $c=0$ and $L\in \mathcal{R}_0$. Then, system \eqref{NKdVN} is exactly controllable around the origin $0$ in $L^2(0,L)$, that is, there exists $\delta>0$ such that, for very $u_0,u_T\in L^2(0,L)$ with
	\begin{align*}
	\|u_0\|_{L^2(0,L)},\|u_T\|_{L^2(0,L)}<\delta,
	\end{align*}
it is possible to find $h\in L^2(0,T)$ such that the corresponding solution of \eqref{NKdVN} satisfying $u(\cdot,0)=u_0$ and $u(\cdot,T)=u_T$.
\end{theorem}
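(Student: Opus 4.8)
The plan is to realize Coron's return method through a power series expansion in a small amplitude parameter, in the spirit of the work of Coron and Crépeau for the Dirichlet problem, and to adapt it to the Neumann boundary conditions of \eqref{NKdVN}. The whole point is that, since $L\in\mathcal{R}_0$, Theorem \ref{control-c} tells us the linearized system \eqref{lpc} (with $c=0$) is \emph{not} exactly controllable: there is a nontrivial finite dimensional subspace $\mathcal{M}\subset L^2(0,L)$, unreachable from $0$ by the linear control. Consequently the usual scheme ``control the linearization, then close by a fixed point'' breaks down, and the nonlinear term $yy_x$ must be exploited to recover the finitely many missing directions.

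First I would make the obstruction explicit. Linearizing \eqref{NKdVN} around $0$ gives \eqref{lpc}; following Rosier's analysis, the unreachable subspace $\mathcal{M}$ is characterized through the stationary eigenvalue problem for the adjoint spatial operator, whose nontrivial solutions produce boundary data incompatible with the observability inequality. I would write down an explicit basis $\varphi_1,\dots,\varphi_d$ of $\mathcal{M}$ for \emph{both} families contained in $\mathcal{R}_0$, namely the lengths $m\pi$ and the lengths $\tfrac{2\pi}{\sqrt3}\sqrt{m^2+ml+l^2}$, keeping in mind that $d=\dim\mathcal{M}$ is governed by the arithmetic of $L$. Here the hypothesis $c=0$ is decisive: it renders these eigenfunctions explicit (trigonometric/exponential), which is what makes the subsequent computation feasible. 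I would also fix the orthogonal splitting $L^2(0,L)=\mathcal{M}\oplus\mathcal{M}^{\perp}$, on which, by Theorem \ref{control-c}, the linear dynamics is controllable.

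The core step is the expansion. Seeking
\[ y=\sum_{n\ge 1}\varepsilon^{n}y^{(n)},\qquad h=\sum_{n\ge 1}\varepsilon^{n}h^{(n)}, \]
and matching powers of $\varepsilon$, one finds at order $\varepsilon$ the linear system \eqref{lpc} driven by $h^{(1)}$ (which I would use to transfer freely inside $\mathcal{M}^{\perp}$, arranging $y^{(1)}(\cdot,0)=y^{(1)}(\cdot,T)=0$), at order $\varepsilon^{2}$ the same linear equation forced by $-y^{(1)}y^{(1)}_x=-\tfrac12\partial_x(y^{(1)})^2$, and at order $\varepsilon^{3}$ the linear equation forced by $-\partial_x(y^{(1)}y^{(2)})$. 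The decisive task is to show that, tuning the lower-order controls, the projection onto $\mathcal{M}$ of the final state generated by the nonlinear source spans all of $\mathcal{M}$, in both the $+\varphi_j$ and $-\varphi_j$ directions. This reduces to a non-degeneracy statement for a bilinear (respectively trilinear) form assembled from integrals of products of the eigenfunctions $\varphi_j$ against the controlled flow, obtained by pairing the source term with the adjoint solutions. I expect this to be the main obstacle: the order-two contribution may vanish on part of $\mathcal{M}$ (as already occurs in the Dirichlet setting), forcing the passage to the order-three term, and the relevant coefficient has to be computed and shown nonzero separately for the two families of critical lengths and for every value of $d$.

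Once the controlled expansion reaches $\mathcal{M}$, the argument is closed by a fixed point. Writing $y=z+r$, where $z$ is the finite truncation of the series selected above (which already attains the target in $\mathcal{M}$ up to the needed order) and $r$ is a remainder, the map sending $r$ to the solution of the resulting nonlinear Neumann problem is a contraction on a small ball of $\mathcal{Z}_T=C([0,T];L^2(0,L))\cap L^2(0,T;H^1(0,L))$, thanks to the smoothing and a priori estimates for \eqref{lpc} together with the bilinear bound on $yy_x$ in $\mathcal{Z}_T$. To obtain the full statement for arbitrary small $y_0,y_T$ I would split the time interval, steering $y_0$ to $0$ on $[0,T/2]$ and $0$ to $y_T$ on $[T/2,T]$: each half is a local controllability to, respectively from, the equilibrium $0$ with the control kept at $x=L$, and each is furnished by the same return-method expansion (with $y_0$, respectively $y_T$, entering only through its already controllable $\mathcal{M}^{\perp}$ part and its small $\mathcal{M}$ part reached by the nonlinearity). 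Concatenating the two controls yields $h\in L^2(0,T)$ steering $y_0$ to $y_T$, which is the assertion of Theorem \ref{c-length}.
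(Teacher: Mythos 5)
Your proposal follows the Coron--Cr\'epeau power-series realization of the return method, which is genuinely different from what the paper does, but as written it has a gap at exactly the point where all the difficulty of that method is concentrated. The entire burden of the expansion argument is the non-degeneracy statement: that the quadratic source $-\tfrac12\partial_x\bigl(y^{(1)}\bigr)^2$ (or, failing that, the cubic one) can be tuned to produce final states whose projections span all of $\mathcal{M}$ in both signs. You correctly identify this as ``the main obstacle'' and then do not prove it. For the Dirichlet problem this step required the long explicit computations of Coron--Cr\'epeau, Cerpa, and Cerpa--Cr\'epeau, carried out separately according to $\dim\mathcal{M}$ and the arithmetic of $L$; for the Neumann problem treated here the unreachable subspace associated with $\mathcal{R}_0$ (which contains the extra family $\{m\pi\}$ absent from Rosier's set $\mathcal{N}$) has a different structure, and no such computation exists in the literature or in your argument. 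Without it the proof does not close: you cannot assert that the nonlinearity recovers the missing directions. A secondary issue is your claim that $h^{(1)}$ lets you ``transfer freely inside $\mathcal{M}^\perp$''; partial controllability on the reachable complement also has to be extracted from the Neumann-case spectral analysis of \cite{CaCaZh}, not merely quoted from Theorem \ref{control-c}, which is an all-or-nothing statement.

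For comparison, the paper avoids the expansion entirely. Its key observation (Theorem \ref{c-ap}) is arithmetic: if $L\in\mathcal{R}_0$, then for every sufficiently small $d\neq 0$ one has $L\notin\mathcal{R}_d$, because the set of equilibria $d$ for which $L$ is critical is discrete. Hence the linearization around the \emph{constant state} $d$ is exactly controllable by Theorem \ref{control-c}, and two fixed-point arguments (Propositions \ref{d-0to-c} and \ref{driving-0}) steer $y_0$ to the constant $d$ on $[0,T/3]$ and $d$ to $y_T$ on $[2T/3,T]$, with the trajectory held at $d$ in between. This yields the result with no spectral computation and no non-degeneracy verification. If you want to salvage your route, you would need to supply the explicit basis of $\mathcal{M}$ for the Neumann problem and carry out the second- and third-order reachability computations for both families of critical lengths; otherwise the paper's perturbation-of-the-equilibrium argument is the one that actually completes the proof.
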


\vspace{0.2cm}

This previous result can be generalized for any $c$ as follows:

\vspace{0.2cm}

\begin{theorem}\label{c-length-a}
	Let $T>0$ and $L\in \mathcal{R}_c$. The system \eqref{NKdVN} is exactly controllable around $c$ in $L^2(0,L)$ in the sense of Theorem \ref{c-length}.
\end{theorem}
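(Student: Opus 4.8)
The plan is to deduce Theorem~\ref{c-length-a} from the already settled case $c=0$ (Theorem~\ref{c-length}) by means of the scaling invariance of the KdV equation. The algebraic fact driving the reduction is that the critical set is homogeneous under dilations: setting $\lambda:=\sqrt{1+c}>0$ (note that $L\in\mathcal{R}_c$ forces $c>-1$), one verifies term by term, straight from the definitions, that $\lambda\,\mathcal{R}_c=\mathcal{R}_0$, whence
\[
L\in\mathcal{R}_c\iff L':=\lambda L\in\mathcal{R}_0.
\]
Thus a critical length for the parameter $c$ is carried precisely onto a critical length for $c=0$, which is exactly the regime covered by Theorem~\ref{c-length}.

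First I would introduce the change of variables
\[
y(x,t)=\lambda^{2}\,z\bigl(\lambda x,\lambda^{3}t\bigr),\qquad (x,t)\in(0,L)\times(0,T),
\]
and check by a direct computation that $y$ solves \eqref{NKdVN} with parameter $c$ on $(0,L)\times(0,T)$ if and only if $z$ solves \eqref{NKdVN} with $c=0$ on $(0,L')\times(0,T')$, where $L'=\lambda L$ and $T'=\lambda^{3}T$; the homogeneity degrees $\lambda^{2},\lambda,\lambda^{3}$ in amplitude, space and time are forced precisely by the requirement that the dispersion, transport and nonlinear terms rescale consistently. Under this map the Neumann conditions $z_{\xi\xi}(0,\cdot)=z_{\xi\xi}(L',\cdot)=0$ and the control condition $z_{\xi}(L',s)=\widetilde h(s)$ are equivalent to the boundary conditions in \eqref{NKdVN}, with the two controls linked by $h(t)=\lambda^{3}\widetilde h(\lambda^{3}t)$; the end data correspond through $z_0(\xi)=\lambda^{-2}y_0(\xi/\lambda)$ and $z_{T'}(\xi)=\lambda^{-2}y_T(\xi/\lambda)$, and a one-line substitution gives $\|z_0\|_{L^2(0,L')}=\lambda^{-3/2}\|y_0\|_{L^2(0,L)}$ (and likewise for the terminal state), so the smallness assumptions transfer with an explicit constant.

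With these identifications fixed, I would apply Theorem~\ref{c-length} on the interval $(0,L')$ with the (arbitrary) horizon $T'=\lambda^{3}T$: it provides a threshold $\delta_0>0$ such that, for $z_0,z_{T'}$ of size below $\delta_0$, there exist $\widetilde h\in L^2(0,T')$ and a solution $z\in C([0,T'];L^2(0,L'))\cap L^2(0,T';H^1(0,L'))$ joining $z_0$ to $z_{T'}$. Undoing the change of variables then returns $h\in L^2(0,T)$ and $y\in\mathcal{Z}_T$ solving \eqref{NKdVN} with $y(\cdot,0)=y_0$ and $y(\cdot,T)=y_T$, because the scaling is a bicontinuous isomorphism between the two solution spaces; taking $\delta:=\lambda^{3/2}\delta_0$ makes the hypothesis $\|y_0\|_{L^2(0,L)},\|y_T\|_{L^2(0,L)}<\delta$ enough to trigger the argument. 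Since $z\equiv 0$ is mapped to $y\equiv 0$, the map fixes the equilibrium and we obtain local exact controllability in the sense of Theorem~\ref{c-length}.

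The point worth isolating --- and essentially the only place where anything can go wrong --- is the homogeneity identity $\lambda\,\mathcal{R}_c=\mathcal{R}_0$ coupled with the exact matching of the boundary and control data under the anisotropic rescaling, so that a \emph{critical} $L$ for parameter $c$ is sent to a \emph{critical} $L'$ for $c=0$ rather than accidentally to a non-critical one. There is no further analytic difficulty, since all the substantive work --- the return method together with the fixed point construction --- is already contained in Theorem~\ref{c-length}.
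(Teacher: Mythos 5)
Your argument is correct, but it takes a genuinely different route from the paper. The paper does not reduce Theorem~\ref{c-length-a} to Theorem~\ref{c-length}: it proves Theorem~\ref{c-length} directly by the return method (perturbing the equilibrium to a nearby $d$ with $L\notin\mathcal{R}_d$ via Theorem~\ref{c-ap}, then concatenating the trajectories of Propositions~\ref{d-0to-c} and~\ref{driving-0}), and simply asserts that the case of general $c$ ``follows similarly,'' i.e.\ by re-running the same three-step construction with the drift $(1+c)y_x$ --- which is precisely why Theorem~\ref{c-ap} is proved there for arbitrary $c\ne-1$. Your dilation $y(x,t)=\lambda^{2}z(\lambda x,\lambda^{3}t)$ with $\lambda=\sqrt{1+c}$ is a clean alternative: the identity $\lambda\,\mathcal{R}_c=\mathcal{R}_0$ is immediate from the definition, the exponent bookkeeping (amplitude $\lambda^{2}$, space $\lambda$, time $\lambda^{3}$, control $h(t)=\lambda^{3}\widetilde h(\lambda^{3}t)$, $\|z_0\|_{L^2(0,L')}=\lambda^{-3/2}\|y_0\|_{L^2(0,L)}$) checks out, and the modified horizon $T'=\lambda^{3}T$ is harmless because Theorem~\ref{c-length} holds for every positive time. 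What your route buys is economy and transparency: the general case is literally a dilation of the case $c=0$, with no need to redo the fixed-point estimates in the presence of the extra drift. What the paper's route buys is uniformity of presentation, and it sidesteps the one point you should still make explicit --- that the rescaling carries the \emph{mild} solutions of Proposition~\ref{wpgnp} on $(0,L')\times(0,T')$ to mild solutions on $(0,L)\times(0,T)$ (e.g.\ via the Duhamel representation, or by uniqueness in $\mathcal{Z}_T$), so that the function $y$ you produce is a solution in the sense the theorem requires.
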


\vspace{0.2cm}

To prove the previous results we need an auxiliary property that ensures that for $c$ near enough to $0$ (small perturbations of $0$), the system \eqref{NKdV-2} is exactly controllable in a neighborhood of $c$ in $\mathcal{R}_c)$, or precisely,  for $d$ close enough to $c$ one has $L\notin \mathcal{R}_d$ so that, the system \eqref{lpc} corresponding to $d$ is exactly controllable, answering the Question $\mathcal{B}$. In other words, the set of critical lengths is sensitive to small disturbances in equilibrium $c$, and the result can be read as follows. 

\vspace{0.2cm}
\begin{theorem}\label{c-ap}
	Let $T>0$, $c\ne -1$ and $L \in \mathcal{R}_c$. There exists $\varepsilon_c>0$ such that, for every $d \in (c-\varepsilon_c,c+\varepsilon_c)\backslash\{c\}$, $d\ne -1$, we have $L \notin \mathcal{R}_d$. Consequently, the linear system \eqref{lpc}, with $c=d$, is exactly controllable; and the nonlinear system \eqref{NKdVN} is exactly controllable around the steady state $d$ in $L^2(0,L)$, that is, there exists $\delta_d>0$ such that, for any $u_0,u_T\in L^2(0,L)$ with
	\begin{align*}
		\|u_0-d\|_{L^2(0,L)}<\delta_d\text{ \ and \ }\|u_T-d\|_{L^2(0,L)}<\delta_d,
	\end{align*}
	one can find $h \in L^2(0,T)$ such that the system \eqref{NKdVN} admits a unique solution $u \in \mathcal{Z}_T$ satisfying
$u(\cdot,0)=u_0$ and $u(\cdot,T)=u_T$. 
\end{theorem}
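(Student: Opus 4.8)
The plan is to split \Cref{c-ap} into its only nontrivial ingredient, the number-theoretic assertion that for the fixed length $L$ the critical equilibrium $c$ is \emph{isolated} among admissible equilibria, and then to obtain the controllability conclusions for free by invoking the already-established Theorems~\ref{control-c} and \ref{control} with the perturbed equilibrium $d$ in place of $c$. So the real work is to produce $\varepsilon_c>0$ with $L\notin\mathcal{R}_d$ for all $d\in(c-\varepsilon_c,c+\varepsilon_c)\setminus\{c\}$; everything after that is an application of cited results.

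First I would rewrite the membership $L\in\mathcal{R}_d$ as a condition on $d$. Every element of $\mathcal{R}_d$ has the form $\alpha/\sqrt{d+1}$, where $\alpha$ ranges over the set of positive reals
$$
\mathcal{A}:=\left\{\tfrac{2\pi}{\sqrt3}\sqrt{m^2+ml+l^2}\ :\ m,l\in\mathbb{N}^*\right\}\cup\left\{m\pi\ :\ m\in\mathbb{N}^*\right\}.
$$
Hence $L\in\mathcal{R}_d$ holds if and only if $\sqrt{d+1}=\alpha/L$ for some $\alpha\in\mathcal{A}$, i.e. if and only if
$$
d\in D_L:=\left\{\frac{\alpha^2}{L^2}-1\ :\ \alpha\in\mathcal{A}\right\}.
$$
In particular every point of $D_L$ lies in $(-1,\infty)$; since the hypothesis $L\in\mathcal{R}_c$ already forces $\mathcal{R}_c\neq\emptyset$ and thus $c>-1$, we may from the outset keep $\varepsilon_c\le c+1$ so that the whole interval stays in $(-1,\infty)$ and the requirement $d\neq-1$ is automatic.

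The key step is to observe that $D_L$ is a locally finite subset of $\mathbb{R}$. Writing $\alpha^2$ out, the two families contribute the numbers $\tfrac{4\pi^2}{3}(m^2+ml+l^2)$ and $\pi^2 m^2$, that is, fixed positive constants times the integers $m^2+ml+l^2$ and $m^2$, respectively. Each family is therefore the image of a subset of $\mathbb{Z}$ under an injective affine map, hence closed with no accumulation point in $\mathbb{R}$; since any accumulation point of the union would have to be an accumulation point of one of the two families, the union $\{\alpha^2:\alpha\in\mathcal{A}\}$, and with it $D_L$, has no accumulation points at all. Consequently every point of $D_L$ is isolated. As $L\in\mathcal{R}_c$ gives $c\in D_L$, there is an $\varepsilon_c>0$ with $(c-\varepsilon_c,c+\varepsilon_c)\cap D_L=\{c\}$, so that $L\notin\mathcal{R}_d$ for all $d\in(c-\varepsilon_c,c+\varepsilon_c)\setminus\{c\}$. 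For each such $d$ (with $d\neq-1$), Theorem~\ref{control-c} applied with $c=d$ yields exact controllability of the linear system \eqref{lpc}, and feeding this into the fixed point scheme of Theorem~\ref{control} with $c=d$ produces the $\delta_d>0$ for which \eqref{NKdVN} is exactly controllable around the steady state $d$, giving the unique solution $y\in\mathcal{Z}_T$; this completes the proof.

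I expect no serious analytic obstacle: the entire difficulty is conceptual, namely recognizing that for fixed $L$ the admissible critical equilibria form a discrete set, so that each critical $c$ sits inside a punctured interval of non-critical equilibria. The only points deserving care are the verification of local finiteness of the union of the two families — immediate because both $\{m^2+ml+l^2\}$ and $\{m^2\}$ lie in $\mathbb{Z}$ and so cannot accumulate — and the degenerate range $d\le-1$, where $\mathcal{R}_d$ contains no admissible length and the conclusion $L\notin\mathcal{R}_d$ holds vacuously, which is why shrinking $\varepsilon_c$ below $c+1$ suffices to dispose of the excluded value $d=-1$.
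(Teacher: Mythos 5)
Your proposal is correct and follows essentially the same route as the paper: you characterize the set of equilibria $d$ for which $L\in\mathcal{R}_d$ as a discrete subset of $\mathbb{R}$ (discreteness coming from the integrality of $m^2+ml+l^2$ and $m^2$), conclude that $c$ is isolated in it, and then quote Theorems~\ref{control-c} and \ref{control} with $d$ in place of $c$. The only difference is presentational — you parametrize the bad set once as $D_L=\{\alpha^2/L^2-1:\alpha\in\mathcal{A}\}$ instead of splitting into the two cases for the form of $L$ and estimating pairwise distances explicitly as the paper does — and your remark that $\varepsilon_c\le c+1$ makes the exclusion $d\ne-1$ automatic is a small but valid tidying of the hypotheses.
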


\subsection{Heuristic and paper's outline}  

The proof of Theorem \ref{c-ap} is based on the topological properties of real numbers together with Theorem \ref{control}. Moreover, with this in hand, both results stated in the previous paragraph (Theorems \ref{c-length} and \ref{c-length-a}) rely on the so-called \textit{return method} together with the fixed point argument.

It is important to point out that the return method was introduced by J.M. Coron in \cite{Coron1992} (see also \cite{Coron1993}) and has been used by several authors to prove control results in the critical lengths for the KdV-type equation (see, for instance, \cite{cerpa1,cerpa2,CoCre,crepeau}). This method consists of building particular trajectories of the system \eqref{NKdVN} starting and ending at some equilibrium such that the linearization of the system around these trajectories has good properties. Here, we use a combination of this method with a fixed point argument, successfully applied in  \cite{Glass}.  We mention that this method can be applied together with quasi-static deformations and power series expansion. We refer the reader to the nice book of Coron \cite{CoronBook} for more details of the method.

It is important to emphasize that our control $h \in L^2(0,T)$ is guaranteed by the construction of $u$ (see below). In this setting, the resulting control (or trajectory) is not unique. Furthermore, choosing smoother initial and terminal data yields higher regularity for both the control and the state within this framework. However, as is well known in the literature, when the initial data lie in $L^2$, the regularity of the control is optimal at the $L^2$--level.

Concerning the construction of solutions to the Theorems \ref{c-length} and \ref{c-length-a}, we follow the following procedure: In the first step, we construct a solution that starts from $y_0$ and reaches at time $T / 3$ a state which is in some sense close to $d$ (which is yet to be defined). Then we construct a solution (close to the state solution $d$ ), which starts at time $2T / 3$ from the previous state. In the last step, we bring the latter state to 0 \textit{via} a function $u_2$, as we can see in Figure \ref{fig1} below, considering the characterization of the function $u:[0,L]\times [0,T]\rightarrow\mathbb{R}$ by
\begin{align*}
u=\left\{
\begin{array}{ll}
u_1,&\text{ in }[0,T/3],\\
d,&\text{ in }[T/3,2T/3],\\
u_2,&\text{ in }[2T/3,T].
\end{array}
\right.
\end{align*}

\begin{figure}[h!]
	\centering
	\includegraphics[width=0.8\linewidth]{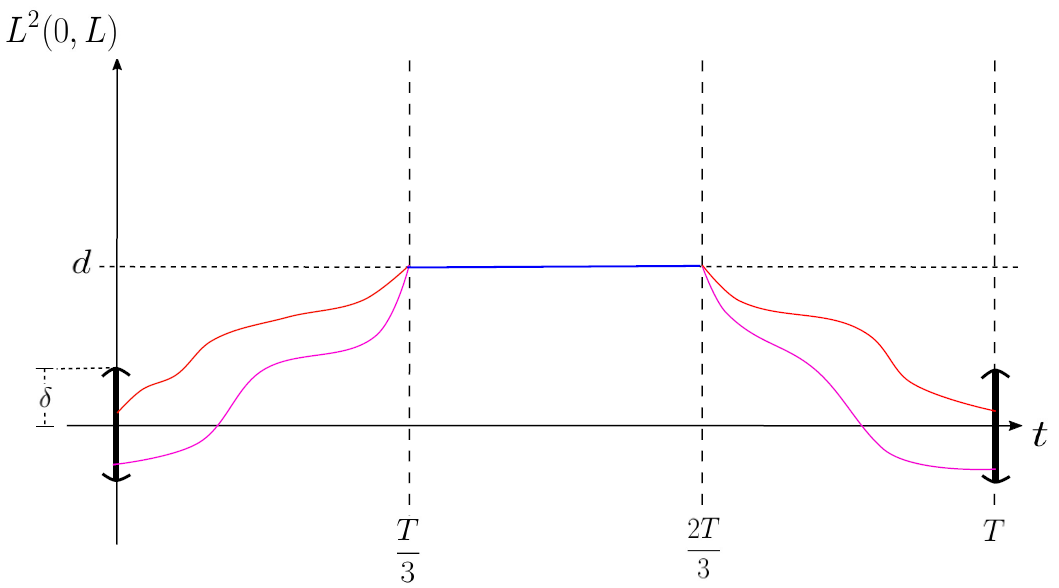}
	\caption{Solutions driving states close to 0 to constants and vice versa.}
	\label{fig1}
\end{figure}
 
We finish this introduction with an outline of this work, which consists of three parts, including the introduction. Section \ref{sec2} gives an overview of the well-posedness of the system \eqref{NKdVN}. Section \ref{sec3} is devoted to proving carefully the controllability of the system \eqref{NKdVN} when $L\in\mathcal{R}_c$. Precisely, in the first part of Section \ref{sec3}, we deal with the proof of Theorem \ref{c-ap}. In the second part, we prove the construction of the function $u$ mentioned before, and, in the third part of Section \ref{sec3}, we use these previous results to achieve Theorem \ref{c-length}. Finally, Section \ref{sec4} is devoted to final remarks and comments.

\section{Overview of the well-posedness theory}\label{sec2}

In this section, we review the well-posedness theory for the KdV equation. The results presented here can be found in \cite{BSZ,CaCaZh,KRZ}.  For that, consider $L>0$ and $T_0,T_1\in\mathbb{R}$ with $T_0<T_1$. We define the space
\begin{align*}
	\mathcal{Z}_{T_0,T_1}:=C\left([T_0,T_1];L^2(0,L)\right)\cap L^2\left([T_0,T_1];H^1(0,L)\right)
\end{align*}
which is a Banach space with the following norm
\begin{align*}
	\|y\|_{\mathcal{Z}_{T_0,T_1}}:=\max_{t \in [T_0,T_1]}\|y(\cdot,t)\|_{L^2(0,L)}+\left(\int_{T_0}^{T_1}\|y(\cdot,t)\|_{H^1(0,L)}^2dt\right)^{1/2}.
\end{align*}
For any $T>0$ we denote $\mathcal{Z}_{0,T}$ simply by $\mathcal{Z}_T$. 

Additionally, let $T>0$ be given and consider the space
\begin{align*}
	\mathcal{H}_T:=H^{-\frac{1}{3}}(0,T)\times L^2(0,T)\times H^{-\frac{1}{3}}(0,T)
\end{align*}
with a norm
$$
\|(h_1,h_2,h_3)\|_{\mathcal{H}_T}:=\|h_1\|_{H^{-\frac{1}{3}}(0,T)}+\|h_2\|_{L^2(0,T)}+\|h_3\|_{H^{-\frac{1}{3}}(0,T)}.
$$
The next proposition, showed in \cite[Proposition 2.5]{CaCaZh}, provides the well-posedness to the following system 
\begin{align}\label{wdrifit}
	\left\{
	\begin{array}{ll}
		u_t+u_{xxx}=f,&\ \text{ in }(0,L)\times(0,T),\\
		u_{xx}(0,t)=h_1(t),\ u_{x}(L,t)=h_2(t),\ u_{xx}(L,t)=h_3(t),&\ \text{ in }(0,T),\\
		u(x,0)=u_0,&\ \text{ in }(0,L).
	\end{array}
	\right.
\end{align}
\begin{proposition}[Caicedo, Capistrano-Filho, Zhang \cite{CaCaZh}]\label{wwdrifit}
For any $v_0 \in L^2(0,L)$, ${\bf h}=(h_1,h_2,h_3)\in \mathcal{H}_T$ and $f \in L^1(0,T,L^2(0,L))$, the IBVP \eqref{wdrifit} admits a unique mild solution $u \in \mathcal{Z}_T$, which satisfies
\begin{align*}
	\partial_x^ju\in L^\infty_x(0,L;H^{(1-j)/3}(0,T)),\ j=0,1,2.
\end{align*}
Moreover, there exists $C_1>0$ such that
\begin{align*}
	\|u\|_{\mathcal{Z}_T}+\sum_{j=0}^{2}\left\|\partial_x^ju\right\|_{L^\infty_x(0,L;H^{(1-j)/3}(0,T))}\leq C_1\left(\|u_0\|_{L^2(0,L)}+\|{\bf h}\|_{\mathcal{H}_T}+\|f\|_{L^1(0,T;L^2(0,L))}\right).
\end{align*}
\end{proposition}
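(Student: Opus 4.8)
The plan is to exploit linearity and solve \eqref{wdrifit} by superposition, writing $u=u^{1}+u^{2}$, where $u^{1}$ solves \eqref{wdrifit} with boundary data $\mathbf{h}\equiv 0$ (carrying the datum $u_0$ and the source $f$) and $u^{2}$ solves \eqref{wdrifit} with $u_0=0$, $f\equiv 0$ (carrying the inhomogeneous boundary triple $\mathbf{h}$). For $u^{1}$ I would first show that the operator $A\phi=-\phi'''$, with domain
$$
D(A)=\{\phi\in H^3(0,L):\ \phi''(0)=\phi'(L)=\phi''(L)=0\},
$$
generates a $C_0$-semigroup of contractions $\{W(t)\}_{t\ge 0}$ on $L^2(0,L)$. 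An integration by parts gives $\langle A\phi,\phi\rangle=-\tfrac12(\phi'(0))^2\le 0$, so $A$ is dissipative; computing the adjoint one finds $A^*\psi=\psi'''$ on $D(A^*)=\{\psi\in H^3(0,L):\ \psi'(0)=\psi''(0)=\psi''(L)=0\}$, with $\langle A^*\psi,\psi\rangle=-\tfrac12(\psi'(L))^2\le 0$, so $A^*$ is dissipative as well. Since $A$ is densely defined and closed, the Lumer--Phillips theorem yields the semigroup, and Duhamel's formula provides the mild solution $u^{1}(t)=W(t)u_0+\int_0^t W(t-s)f(s)\,ds$ with $\|u^{1}\|_{C([0,T];L^2)}\lesssim\|u_0\|_{L^2}+\|f\|_{L^1(0,T;L^2)}$.

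Next I would upgrade $u^{1}$ to the space $\mathcal{Z}_T$ and extract the boundary traces through multiplier identities, which is where the characteristic smoothing of the Airy operator enters. Multiplying $u^{1}_t+u^{1}_{xxx}=f$ by $u^{1}$ and integrating in $x$ yields a dissipation identity whose boundary term is $\tfrac12(u^{1}_x(0,t))^2$, giving at once $u^{1}_x(0,\cdot)\in L^2(0,T)$ controlled by the data. Multiplying instead by $xu^{1}$ and integrating in $x$ and $t$ produces $\tfrac32\int_0^T\!\!\int_0^L (u^{1}_x)^2$ on the left, whose only dangerous boundary contribution is $\int_0^T u^{1}(0,t)\,u^{1}_x(0,t)\,dt$; this is absorbed using the trace inequality $\|\phi\|_{L^\infty}\lesssim\|\phi\|_{L^2}^{1/2}\|\phi\|_{H^1}^{1/2}$ together with Young's inequality, establishing $u^{1}\in L^2(0,T;H^1)$ and hence $u^{1}\in\mathcal{Z}_T$. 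The sharper hidden-regularity statements $\partial_x^j u^{1}\in L^\infty_x(0,L;H^{(1-j)/3}(0,T))$ for $j=0,1,2$ would follow from the explicit spectral representation of $W(t)$ and interpolation, exactly as in the Airy analysis of \cite{BSZ}, with the source handled through the Duhamel formula.

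The heart of the argument is the boundary-forcing piece $u^{2}$, and this is the step I expect to be the main obstacle. I would obtain an explicit representation by taking the Laplace transform in $t$: with $\widehat{u^{2}}(x,s)=\int_0^\infty e^{-st}u^{2}(x,t)\,dt$, the equation becomes the ODE $\widehat{u^{2}}_{xxx}+s\,\widehat{u^{2}}=0$, whose three characteristic roots $\lambda_k(s)$ solve $\lambda^3=-s$; imposing the transformed boundary conditions $\widehat{u^{2}}_{xx}(0,s)=\widehat{h}_1$, $\widehat{u^{2}}_{x}(L,s)=\widehat{h}_2$, $\widehat{u^{2}}_{xx}(L,s)=\widehat{h}_3$ determines the coefficients by inverting a $3\times 3$ matrix $M(s)$. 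The delicate points are to show that $M(s)$ is invertible for $s$ on the Bromwich contour and to obtain uniform-in-$s$ bounds on the entries of $M(s)^{-1}$ carrying the correct powers of $s$; after inverting the Laplace transform, these translate, via oscillatory-integral and stationary-phase estimates on the resulting symbols, into continuity of the boundary map $\mathbf{h}\mapsto u^{2}$ from $\mathcal{H}_T$ into $\mathcal{Z}_T$ together with the trace bounds $\partial_x^j u^{2}\in L^\infty_x(0,L;H^{(1-j)/3}(0,T))$. The matching of the regularity indices $H^{-1/3},L^2,H^{-1/3}$ for $(h_1,h_2,h_3)$ with the one-third gain of the Airy smoothing is precisely what makes these estimates sharp, and I would carry out this part along the lines of \cite{BSZ,KRZ}.

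Finally I would assemble the pieces: summing the two families of bounds gives the stated inequality with a constant $C_1$ depending only on $T$ and $L$. Uniqueness follows from linearity, since the difference of two solutions solves \eqref{wdrifit} with $u_0=0$, $f\equiv 0$, $\mathbf{h}\equiv 0$, whence the energy identity from the first multiplier shows its $L^2(0,L)$-norm is non-increasing from $0$ and therefore identically zero. In summary, the whole argument reduces to verifying the dissipativity of $A$ and $A^*$ for this particular Neumann-type boundary triple, running the two multiplier estimates for the internal piece, and establishing the sharp Laplace-transform bounds for the boundary-forcing piece, the last being the genuinely technical ingredient already available in \cite{CaCaZh}.
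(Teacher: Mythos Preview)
The paper does not prove this proposition at all: it is quoted verbatim as \cite[Proposition 2.5]{CaCaZh} and used as a black box. So there is no ``paper's own proof'' to compare against; the relevant benchmark is the argument in \cite{CaCaZh} (which in turn builds on \cite{BSZ,KRZ}).

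Your outline is precisely the standard route taken in those references: split $u=u^{1}+u^{2}$ by linearity; for $u^{1}$ show that $A\phi=-\phi'''$ with $D(A)=\{\phi\in H^3:\phi''(0)=\phi'(L)=\phi''(L)=0\}$ generates a contraction semigroup via Lumer--Phillips (your dissipativity computations for $A$ and $A^*$ are correct for this Neumann-type triple), then run the two multipliers $u^{1}$ and $xu^{1}$ to land in $\mathcal{Z}_T$ and get the hidden trace regularity; for $u^{2}$ take the Laplace transform in $t$, solve the third-order ODE in $x$, invert the $3\times3$ boundary matrix on a vertical contour, and convert the resulting symbol bounds into the sharp $H^{(1-j)/3}$ trace estimates. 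This is exactly how \cite{CaCaZh} proceeds, and you correctly flag the Laplace-transform step as the genuinely technical one.

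One small caution: you should not understate the hidden-regularity part for $u^{1}$. The statements $\partial_x^j u^{1}\in L^\infty_x(0,L;H^{(1-j)/3}(0,T))$ do not drop out of the two multiplier identities alone; in \cite{BSZ,KRZ,CaCaZh} they come from the explicit boundary-integral (Laplace/Fourier) representation of the semigroup $W(t)$ itself, and the Duhamel piece requires a separate argument. Your sentence ``would follow from the explicit spectral representation of $W(t)$ and interpolation'' is pointing in the right direction but is doing a lot of work; if you were writing this out in full you would need to reproduce those oscillatory-integral bounds for the homogeneous-boundary problem as well, not only for $u^{2}$.
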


\begin{remark}
We highlight that the constant $C_1$ in the above result depends on $T$. However, if $\theta \in (0,T]$ then, the estimates in the Proposition \ref{wwdrifit} hold with the same constant $C_1$ corresponding to $T$. 

\vspace{0.2cm}

In fact, let $u_0 \in L^2(0,L)$, ${\bf h}\in \mathcal{H}_\theta, \ f \in L^1(0,\theta;L^2(0,L))$ and $u\in \mathcal{Z}_\theta$ the solution of \eqref{wdrifit} corresponding to these datas. We extend ${\bf h}$ and $f$ to $[0,T]$ (we will also denote these extensions by ${\bf h}$ and $f$) putting
$${\bf h}=0 \text{ in }(\theta,T]\quad \text{and} \quad f=0\text{ in }(\theta,T].$$
Now, denote by $\tilde{u}\in \mathcal{Z}_T$ the corresponding solution of \eqref{wdrifit}. Then, from Proposition \ref{wwdrifit}, we have $\tilde{u}\big|_{[0,\theta]}=u$ and
\begin{align*}
\|u\|_{\mathcal{Z}_\theta}\leq \|\tilde{u}\|_{\mathcal{Z}_T}&\leq C_1\left(\|u_0\|_{L^2(0,L)}+\|{\bf h}\|_{\mathcal{H}_T}+\|f\|_{L^1(0,T;L^2(0,L))}\right)\\
&=C_1\left(\|u_0\|_{L^2(0,L)}+\|{\bf h}\|_{\mathcal{H}_\theta}+\|f\|_{L^1(0,\theta;L^2(0,L))}\right).
\end{align*}
\end{remark}

Using the Proposition \ref{wwdrifit}, we can get properties for the linear problem
\begin{align}\label{glp}
	\left\{
	\begin{array}{ll}
		y_t+(ay)_x+y_{xxx}=f,&\ \text{ in }(0,L)\times(0,T),\\
		y_{xx}(0,t)=h_1(t),\ y_{x}(L,t)=h_2(t),\ y_{xx}(L,t)=h_3(t),&\ \text{ in }(0,T),\\
		y(x,0)=y_0(x),&\ \text{ in }(0,L),
	\end{array}
	\right.
\end{align}
where $a \in \mathcal{Z}_T$ is given. To do this, the following lemma will be very useful and was proved in \cite[Lemma 3]{KZ}.

\vspace{0.2cm}

\begin{lemma}[Kramer, Zhang \cite{KZ}]\label{ywx} There exists a constant $C>0$ such that
	\begin{align*}
		\int_0^T\|uv_x(\cdot,t)\|_{L^2(0,L)}dt\leq C\left(T^\frac{1}{2}+T^\frac{1}{3}\right)\|u\|_{\mathcal{Z}_T}\|v\|_{\mathcal{Z}_T},
	\end{align*}
	for every $u,v\in \mathcal{Z}_T$.
\end{lemma}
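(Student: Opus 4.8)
\emph{Proof sketch.} The estimate is a purely bilinear inequality on $\mathcal{Z}_T$, and the plan is to prove it by freezing the time variable, estimating in the space variable by a one–dimensional interpolation (Gagliardo–Nirenberg) inequality, and then integrating in $t$ with Hölder's inequality, the powers of $T$ being produced by pairing the resulting factors against the constant function $1\in L^p(0,T)$.

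First, for a.e.\ $t\in(0,T)$, Hölder's inequality in $x$ gives
\[
\|uv_x(\cdot,t)\|_{L^2(0,L)}\le \|u(\cdot,t)\|_{L^\infty(0,L)}\,\|v_x(\cdot,t)\|_{L^2(0,L)} .
\]
The $L^\infty_x$–factor is then controlled by the one–dimensional Gagliardo–Nirenberg (Agmon) inequality on the bounded interval,
\[
\|u(\cdot,t)\|_{L^\infty(0,L)}\le C\Big(\|u(\cdot,t)\|_{L^2(0,L)}^{1/2}\,\|u_x(\cdot,t)\|_{L^2(0,L)}^{1/2}+L^{-1/2}\|u(\cdot,t)\|_{L^2(0,L)}\Big),
\]
whose constant is independent of $T$. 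This splits the integrand into a leading quadratic-gradient term and a lower–order term, which will account for the two different powers of $T$.

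For the lower–order term I would use that $\sup_{t}\|u(\cdot,t)\|_{L^2(0,L)}\le \|u\|_{\mathcal{Z}_T}$ and apply the Cauchy–Schwarz inequality in $t$ against $1\in L^2(0,T)$,
\[
\int_0^T \|u\|_{L^2}\,\|v_x\|_{L^2}\,dt \le \|u\|_{\mathcal{Z}_T}\,T^{1/2}\Big(\int_0^T\|v_x\|_{L^2}^2\,dt\Big)^{1/2}\le T^{1/2}\,\|u\|_{\mathcal{Z}_T}\|v\|_{\mathcal{Z}_T},
\]
where the last inequality is the definition of the $\mathcal{Z}_T$–norm; this yields the $T^{1/2}$ contribution. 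For the leading term, bounding again $\|u\|_{L^2}\le\|u\|_{\mathcal{Z}_T}$ and applying Hölder in $t$ to the product $\|u_x\|_{L^2}^{1/2}\,\|v_x\|_{L^2}\cdot 1$—with $\|u_x\|_{L^2},\|v_x\|_{L^2}\in L^2(0,T)$ bounded by $\|u\|_{\mathcal{Z}_T}$ and $\|v\|_{\mathcal{Z}_T}$—produces a term of the form $T^{\beta}\|u\|_{\mathcal{Z}_T}\|v\|_{\mathcal{Z}_T}$. A direct distribution of the exponents gives only $\beta=\tfrac14$; to obtain the sharper power $\beta=\tfrac13$ claimed in the statement one invokes the finer space–time smoothing of Proposition \ref{wwdrifit} (the embedding $\partial_x^{\,j}u\in L^\infty_x(0,L;H^{(1-j)/3}(0,T))$, hence $u\in L^\infty_x(0,L;L^6(0,T))$), which upgrades the time–integrability of the $L^\infty_x$–factor and improves the balance in Hölder's inequality.

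Summing the two contributions gives $\int_0^T\|uv_x\|_{L^2}\,dt\le C(T^{1/2}+T^{1/3})\|u\|_{\mathcal{Z}_T}\|v\|_{\mathcal{Z}_T}$, as desired. I expect the main obstacle to be precisely this last point: the elementary Gagliardo–Nirenberg/Hölder bookkeeping yields $T^{1/4}$ on the leading term, and extracting the sharp exponent $T^{1/3}$ will require the Kato–type smoothing estimates of Proposition \ref{wwdrifit} together with a careful mixed–norm ($L^p_tL^q_x$) interpolation between the two components $C([0,T];L^2(0,L))$ and $L^2(0,T;H^1(0,L))$ of $\mathcal{Z}_T$, keeping track of the dependence on $T$.
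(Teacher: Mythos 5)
First, note that the paper does not actually prove this lemma: it is quoted verbatim from \cite[Lemma 3]{KZ}, so there is no in-paper argument to match your sketch against. On its own merits, the main line of your proposal (H\"older in $x$, the one-dimensional Agmon/Gagliardo--Nirenberg inequality, then H\"older in $t$ against $1$) is the standard route and is carried out correctly: the lower-order term gives $T^{1/2}\|u\|_{\mathcal{Z}_T}\|v\|_{\mathcal{Z}_T}$, and the leading term, after writing $\int_0^T\|u\|_{L^2}^{1/2}\|u_x\|_{L^2}^{1/2}\|v_x\|_{L^2}\,dt \le \sup_t\|u\|_{L^2}^{1/2}\,T^{1/4}\bigl(\int_0^T\|u_x\|_{L^2}^2\,dt\bigr)^{1/4}\bigl(\int_0^T\|v_x\|_{L^2}^2\,dt\bigr)^{1/2}$, gives $T^{1/4}\|u\|_{\mathcal{Z}_T}\|v\|_{\mathcal{Z}_T}$. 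So your argument honestly proves the bound with $C\,(T^{1/2}+T^{1/4})$.

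The genuine gap is your proposed mechanism for upgrading $T^{1/4}$ to $T^{1/3}$. You cannot invoke Proposition \ref{wwdrifit}: its sharp trace estimates $\partial_x^j u\in L^\infty_x(0,L;H^{(1-j)/3}(0,T))$ are properties of \emph{solutions of the linear KdV initial-boundary value problem}, whereas Lemma \ref{ywx} must hold for \emph{arbitrary} $u,v\in\mathcal{Z}_T$ --- in the paper it is applied to arbitrary elements $y,w$ of the ball $B$ in the contraction arguments (see \eqref{eq 7} and \eqref{eq 13}), which satisfy no equation. Worse, no interpolation confined to the two components of $\mathcal{Z}_T$ can reach $T^{1/3}$: taking $u=v=\phi_N$ time-independent, with $\phi_N(x)=N^{1/2}\psi(Nx)$ a normalized bump, one has $\int_0^T\|uv_x\|_{L^2}\,dt\sim T N^{3/2}$ while $\|u\|_{\mathcal{Z}_T}=1+(T+a^2TN^2)^{1/2}$ stays bounded for $N=T^{-1/2}$, so the left-hand side behaves exactly like $T^{1/4}$ against a bounded right-hand side; since $T^{1/3}=o(T^{1/4})$ as $T\to0^+$, the exponent $1/4$ is sharp in this class. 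You should therefore stop at $C\,(T^{1/2}+T^{1/4})$ and not chase $T^{1/3}$. This discrepancy is harmless for everything done in the paper: each application of the lemma (the smallness conditions \eqref{eq 8}, \eqref{eq 14}, \eqref{estimate to epsilon}, \eqref{second estimate to epsilon}) uses only that the prefactor tends to $0$ as the length of the time interval tends to $0$, which $T^{1/2}+T^{1/4}$ does just as well.
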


\vspace{0.2cm}
With these previous results in hand, the following proposition gives us the well-posedness of the general system \eqref{glp}, which will be used several times, so, for the sake of completeness, we will give the proof. 

\vspace{0.2cm}
\begin{proposition}\label{wgproblem}
For any $y_0 \in L^2(0,L)$, ${\bf h}=(h_1,h_2,h_3)\in \mathcal{H}_T$ and  $f \in L^1(0,T,L^2(0,L))$, the IBVP \eqref{glp} admits a unique mild solution $y \in \mathcal{Z}_T$, which satisfies
\begin{align*}
	\partial_x^jy\in L^\infty_x(0,L;H^{(1-j)/3}(0,T)),\ j=0,1,2,
\end{align*}
and
\begin{align*}
	\|y\|_{\mathcal{Z}_T}\leq C_2\left(\|y_0\|_{L^2(0,L)}+\|{\bf h}\|_{\mathcal{H}_T}+\|f\|_{L^1(0,T;L^2(0,L))}\right),
\end{align*}
for some positive constant $C_2$ which depends only on $T$ and $\|a\|_{\mathcal{Z}_T}$. In addition, the solution $y$ possesses the following sharp trace estimates
\begin{align*}
	\sum_{j=0}^{2}\left\|\partial_x^jy\right\|_{L^\infty_x(0,L;H^{(1-j)/3}(0,T))}\leq C_2\left(\|y_0\|_{L^2(0,L)}+\|{\bf h}\|_{\mathcal{H}_T}+\|f\|_{L^1(0,T;L^2(0,L))}\right).
\end{align*}
In particular, the map $(y_0,{\bf h},f)\mapsto y$ is Lipschitz continuous.
\end{proposition}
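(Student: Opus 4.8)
The plan is to treat the first-order term $(ay)_x$ as a source term, reduce \eqref{glp} to the drift problem \eqref{wdrifit} covered by Proposition \ref{wwdrifit}, and close the argument by a contraction mapping carried out on short time intervals which are then glued together. Concretely, I would rewrite the equation as $y_t+y_{xxx}=f-(ay)_x$, so that a solution of \eqref{glp} is exactly a fixed point of the map $\Gamma$ sending $y\in\mathcal{Z}_T$ to the unique mild solution $\tilde y\in\mathcal{Z}_T$ of \eqref{wdrifit} with the same data $y_0$, $\mathbf{h}$ and right-hand side $F(y):=f-(ay)_x$. For $\Gamma$ to be well defined through Proposition \ref{wwdrifit} I must verify that $F(y)\in L^1(0,T;L^2(0,L))$; since $f$ already lies there, it suffices to control $(ay)_x$. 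Expanding $(ay)_x=a\,y_x+a_x\,y$ and applying Lemma \ref{ywx} twice, once with $(u,v)=(a,y)$ and once with $(u,v)=(y,a)$, yields
\begin{align*}
\int_0^T\|(ay)_x(\cdot,t)\|_{L^2(0,L)}\,dt\le 2C\left(T^{1/2}+T^{1/3}\right)\|a\|_{\mathcal{Z}_T}\|y\|_{\mathcal{Z}_T},
\end{align*}
so that $\Gamma$ indeed maps $\mathcal{Z}_T$ into itself.

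Next I would establish a contraction on a short interval $[0,\theta]$. Because the problem is linear in $y$, for $y_1,y_2\in\mathcal{Z}_\theta$ the difference $\Gamma y_1-\Gamma y_2$ solves \eqref{wdrifit} on $[0,\theta]$ with zero initial and boundary data and source $-(a(y_1-y_2))_x$, the term $f$ having cancelled. Invoking Proposition \ref{wwdrifit} with the constant $C_1$ taken uniform in $\theta\in(0,T]$ as recorded in the Remark, together with the estimate above and $\|a\|_{\mathcal{Z}_\theta}\le\|a\|_{\mathcal{Z}_T}$, gives
\begin{align*}
\|\Gamma y_1-\Gamma y_2\|_{\mathcal{Z}_\theta}\le 2C_1C\left(\theta^{1/2}+\theta^{1/3}\right)\|a\|_{\mathcal{Z}_T}\,\|y_1-y_2\|_{\mathcal{Z}_\theta}.
\end{align*}
Choosing $\theta=\theta(\|a\|_{\mathcal{Z}_T})\in(0,T]$ so small that the prefactor is at most $1/2$, the Banach fixed point theorem produces a unique $y\in\mathcal{Z}_\theta$ solving \eqref{glp} on $[0,\theta]$, and Proposition \ref{wwdrifit} transfers the trace regularity $\partial_x^j y\in L^\infty_x(0,L;H^{(1-j)/3}(0,\theta))$ to it.

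Finally I would globalize by partitioning $[0,T]$ into $N=\lceil T/\theta\rceil$ consecutive intervals of length at most $\theta$ and solving on each in turn; since every element of $\mathcal{Z}$ is continuous in time with values in $L^2(0,L)$, the terminal state on one interval is a legitimate $L^2$ initial datum for the next (the drift operator being autonomous, the local result applies after translation in $t$), and the pieces glue into a unique $y\in\mathcal{Z}_T$. Running the short-interval a priori bound successively and summing the geometric accumulation produces the global estimate with a constant $C_2$ depending only on $N$, hence on $T$ and $\|a\|_{\mathcal{Z}_T}$; the same iteration delivers the sharp trace estimates. The claimed Lipschitz continuity of $(y_0,\mathbf{h},f)\mapsto y$ is then immediate from linearity, since the difference of two solutions satisfies \eqref{glp} with the difference of the data and is therefore controlled by the very estimate just proved.

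The hard part will be that $a$ is merely an element of $\mathcal{Z}_T$, so $(ay)_x$ cannot be estimated pointwise and one is forced to rely on the mixed space--time inequality of Lemma \ref{ywx}; moreover the resulting contraction factor carries $\|a\|_{\mathcal{Z}_T}$, which may be large. This is exactly why a single global fixed point argument fails and the time-subdivision is essential: the smallness of $\theta^{1/2}+\theta^{1/3}$ is what absorbs $\|a\|_{\mathcal{Z}_T}$, at the price that both the number of intervals $N$ and the final constant $C_2$ depend on $T$ and on $\|a\|_{\mathcal{Z}_T}$, precisely as stated.
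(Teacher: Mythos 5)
Your proposal is correct and follows essentially the same route as the paper: both treat $(ay)_x$ as a source term for the drift problem of Proposition \ref{wwdrifit}, estimate it via Lemma \ref{ywx} to obtain the factor $2C_1C(\theta^{1/2}+\theta^{1/3})\|a\|_{\mathcal{Z}_T}$, run a Banach fixed point on a short interval $[0,\theta]$ with $\theta$ depending only on $\|a\|_{\mathcal{Z}_T}$, and then extend to $[0,T]$ by continuation, deducing uniqueness and Lipschitz dependence from the resulting linear estimate. The only cosmetic difference is that the paper performs the contraction on a ball $B\subset\mathcal{Z}_\theta$ of explicit radius $r$, whereas you exploit the affine structure to contract on all of $\mathcal{Z}_\theta$; both are valid.
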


\vspace{0.2cm}
\begin{proof}
Let $y_0\in L^2(0,L)$, ${\bf h}\in \mathcal{H}_T$ and $f \in L^1(0,T;L^2(0,L))$ be given. Consider $\theta$ satisfying $0<\theta\leq T$ and define the map $\Gamma:\mathcal{Z}_\theta\rightarrow\mathcal{Z}_\theta$ in the next way: for $y \in \mathcal{Z}_\theta$, put $\Gamma y$ being the solution of
\begin{align*}
	\left\{
	\begin{array}{ll}
		u_t+u_{xxx}=f-(ay)_x,&\ \text{ in }(0,L)\times(0,\theta),\\
		u_{xx}(0,t)=h_1(t),\ u_{x}(L,t)=h_2(t),\ u_{xx}(L,t)=h_3(t),&\ \text{ in }(0,\theta),\\
		u(x,0)=y_0(x),&\ \text{ in }(0,L).
	\end{array}
	\right.
\end{align*}
Consider the set
\begin{align*}
B=\left\{y \in \mathcal{Z}_\theta;\ \|y\|_{\mathcal{Z}_\theta}\leq r\right\},
\end{align*}
with $r>0$ to be determined later. From Proposition \ref{wwdrifit} and Lemma \ref{ywx} we have for any $y \in B$ the following estimate
\begin{equation}\label{eq 7}
\begin{aligned}
\|\Gamma y\|_{\mathcal{Z}_\theta}\leq&C_1\left(\|y_0\|_{L^2(0,L)}+\|{\bf h}\|_{\mathcal{H}_\theta}+\|f\|_{L^1(0,\theta;L^2(0,L))}+\|(ay)_x\|_{L^1(0,\theta;L^2(0,L))}\right)\\
\leq& C_1\left(\|y_0\|_{L^2(0,L)}+\|{\bf h}\|_{\mathcal{H}_\theta}+\|f\|_{L^1(0,\theta;L^2(0,L))}\right)\\&+2C_1C\left(\theta^\frac{1}{2}+\theta^\frac{1}{3}\right)\|a\|_{\mathcal{Z}_\theta}\|y\|_{\mathcal{Z}_\theta}\\
\leq& C_1\left(\|y_0\|_{L^2(0,L)}+\|{\bf h}\|_{\mathcal{H}_\theta}+\|f\|_{L^1(0,\theta;L^2(0,L))}\right)\\&+2C_1C\left(\theta^\frac{1}{2}+\theta^\frac{1}{3}\right)\|a\|_{\mathcal{Z}_T}\|y\|_{\mathcal{Z}_\theta}.
\end{aligned}
\end{equation}
Choosing
$$
r=2C_1\left(\|y_0\|_{L^2(0,L)}+\|{\bf h}\|_{\mathcal{H}_\theta}+\|f\|_{L^1(0,\theta;L^2(0,L))}\right)
$$
and $\theta$ satisfying
\begin{align}\label{eq 8}
2C_1C\left(\theta^\frac{1}{2}+\theta^\frac{1}{3}\right)\|a\|_{\mathcal{Z}_T}<\frac{1}{2},
\end{align}
the inequality \eqref{eq 7} give us
\begin{align*}
	\|\Gamma y\|_{\mathcal{Z}_\theta}\leq \frac{r}{2}+\frac{r}{2}=r,
\end{align*}
that is, $\Gamma(B)\subset B$. Furthermore, $\Gamma y-\Gamma w$ solves
\begin{align*}
	\left\{
	\begin{array}{ll}
		u_t+u_{xxx}=[a(-y+w)]_x,&\ \text{ in }(0,L)\times(0,\theta),\\
		u_{xx}(0,t)=u_{x}(L,t)=u_{xx}(L,t)=0,&\ \text{ in }(0,\theta),\\
		u(x,0)=0,&\ \text{ in }(0,L).
	\end{array}
	\right.
\end{align*}
So, from Proposition \ref{wwdrifit}, Lemma \ref{ywx} and inequality \eqref{eq 8}, we have
\begin{align*}
\|\Gamma y-\Gamma w\|_{\mathcal{Z}_\theta}&\leq C_1\|[a(y-w)]_x\|_{L^1(0,\theta;L^2(0,L))}\\&\leq
2C_1C\left(\theta^\frac{1}{2}+\theta^\frac{1}{3}\right)\|a\|_{\mathcal{Z}_\theta}\|y-w\|_{\mathcal{Z}_\theta}\\
&\leq 2C_1C\left(\theta^\frac{1}{2}+\theta^\frac{1}{3}\right)\|a\|_{\mathcal{Z}_T}\|y-w\|_{\mathcal{Z}_\theta}\\&<\frac{1}{2}\|y-w\|_{\mathcal{Z}_\theta}.
\end{align*}
Thus, $\Gamma:B\rightarrow B$ is a contraction so that, by Banach's fixed point theorem, $\Gamma$ has a fixed point $y\in B$ which is a solution to the problem \eqref{glp} in $[0,\theta]$, corresponding to data $(y_0,{\bf h},f)$. Additionally, inequalities \eqref{eq 7} and \eqref{eq 8} yields that \begin{align*}
	\|y\|_{\mathcal{Z}_\theta}\leq C_1\left(\|y_0\|_{L^2(0,L)}+\|{\bf h}\|_{\mathcal{H}_\theta}+\|f\|_{L^1(0,\theta;L^2(0,L))}\right)+\frac{1}{2}\|y\|_{\mathcal{Z}_\theta}
\end{align*}
and, therefore,
\begin{align*}
	\|y\|_{\mathcal{Z}_\theta}\leq 2C_1\left(\|y_0\|_{L^2(0,L)}+\|{\bf h}\|_{\mathcal{H}_\theta}+\|f\|_{L^1(0,\theta;L^2(0,L))}\right).
\end{align*}
Since $\theta$ depends only on $a$, with a standard continuation extension argument, the solution $y$ can be extended to interval $[0,T]$ and the following estimate holds
\begin{align}\label{eq 9}
	\|y\|_{\mathcal{Z}_T}\leq C_2\left(\|y_0\|_{L^2(0,L)}+\|{\bf h}\|_{\mathcal{H}_T}+\|f\|_{L^1(0,T;L^2(0,L))}\right),
\end{align}
for some suitable constant $C_2>0$ which only depends on $T$ and $\|a\|_{\mathcal{Z}_T}$. Therefore, it follows from \eqref{eq 9} that the map $(y_0,{\bf h},f)\mapsto y$ is Lipschitz continuous and, as a consequence of this, we have the uniqueness of the solution $y$ in $\mathcal{Z}_T$. The sharp trace estimates follow as a consequence of the Proposition \ref{wwdrifit}, showing the proposition. 
\end{proof}

Now, we will study the well-posedness of the nonlinear problem, namely:
\begin{align}\label{gnp}
\left\{
\begin{array}{ll}
y_t+(ay)_x+y_{xxx}+yy_x=f,&\ \text{ in }(0,L)\times(0,T),\\
y_{xx}(0,t)=h_1(t),\ y_{x}(L,t)=h_2(t),\ y_{xx}(L,t)=h_3(t),&\ \text{ in }(0,T),\\
y(x,0)=y_0(x),&\ \text{ in }(0,L).
\end{array}
\right.
\end{align}
The result can be read as follows. 

\vspace{0.2cm}

\begin{proposition}\label{wpgnp} Let $a \in \mathcal{Z}_T$ be given. For every $y_0 \in L^2(0,L)$, ${\bf h}=(h_1,h_2,h_3)\in \mathcal{H}_T$ and  $f \in L^1(0,T,L^2(0,L))$ there exists a unique mild solution $y\in \mathcal{Z}_T$ of \eqref{gnp} which satisfies
$$
	\|y\|_{\mathcal{Z}_T}\leq C_3\left(\|y_0\|_{L^2(0,L)}+\|{\bf h}\|_{L^2(0,T)}+\|f\|_{L^1(0,T;L^2(0,L))}\right)
$$
and
\begin{equation*}
\begin{aligned}
\sum_{j=0}^{2}\left\|\partial_x^jy\right\|_{L^\infty_x(0,L;H^{(1-j)/3}(0,T))}\leq& C_3\left(\|y_0\|_{L^2(0,L)}+\|h\|_{L^2(0,T)}~+\|f\|_{L^1(0,T;L^2(0,L))}\right)\\
&+C_3\left(\|y_0\|_{L^2(0,L)}+\|h\|_{L^2(0,T)}+\|f\|_{L^1(0,T;L^2(0,L))}\right)^2,
\end{aligned}
\end{equation*}
for some constant $C_3>0$. Furthermore, the corresponding solution map $S$ is locally Lipschitz continuous, that is, given $\lambda>0$ there exist $L_\lambda>0$ such that, for every
$$
(y_0,{\bf h},f_0),(y_1,{\bf g},f_1)\in L^2(0,L)\times \mathcal{H}_T\times L^1(0,T;L^2(0,L))
$$
with
$$
\begin{aligned}
\|y_0\|_{L^2(0,L)}+\|{\bf h}\|_{\mathcal{H}_T}+\|f_0\|_{L^1(0,T;L^2(0,L))}<\lambda,\\
\|y_1\|_{L^2(0,L)}+\|{\bf g}\|_{\mathcal{H}_T}+\|f_1\|_{L^1(0,T;L^2(0,L))}<\lambda,
\end{aligned}
$$
we have
\begin{equation*}
\begin{split}
\|S(y_0,{\bf h},f_0)-S(y_1,{\bf g},f_1)\|_{\mathcal{Z}_T}\leq& L_\lambda \left(\|y_0-y_1\|_{L^2(0,L)}\right.\\
&\left.+\|{\bf h}-{\bf g}\|_{\mathcal{H}_T}+\|f_0-f_1\|_{L^1(0,T;L^2(0,L))}\right).
\end{split}
\end{equation*}
\end{proposition}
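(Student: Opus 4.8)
The plan is to recast \eqref{gnp} as a fixed point problem for the \emph{linear} drift equation \eqref{glp}, treating the nonlinearity $yy_x$ as an extra forcing term, exactly in the spirit of the proof of Proposition \ref{wgproblem}. Concretely, for $0<\theta\le T$ to be fixed and $z\in\mathcal{Z}_\theta$, I would let $\Gamma z\in\mathcal{Z}_\theta$ be the unique solution provided by Proposition \ref{wgproblem} of \eqref{glp} on $(0,L)\times(0,\theta)$, with the same drift $a$, the same boundary data $\mathbf{h}$ and initial datum $y_0$, but with right-hand side $f-zz_x$. A fixed point of $\Gamma$ is precisely a mild solution of \eqref{gnp}. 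The entire contraction argument runs on a short interval $[0,\theta]$ and is then propagated to $[0,T]$.

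Two estimates drive the contraction, both consequences of Proposition \ref{wgproblem} combined with Lemma \ref{ywx}. For the self-mapping property on the ball $B=\{z\in\mathcal{Z}_\theta:\|z\|_{\mathcal{Z}_\theta}\le r\}$, Lemma \ref{ywx} taken with $u=v=z$ gives
\[
\|zz_x\|_{L^1(0,\theta;L^2(0,L))}\le C\bigl(\theta^{1/2}+\theta^{1/3}\bigr)\|z\|_{\mathcal{Z}_\theta}^2,
\]
so Proposition \ref{wgproblem} yields $\|\Gamma z\|_{\mathcal{Z}_\theta}\le C_2\bigl(\|y_0\|_{L^2(0,L)}+\|\mathbf{h}\|_{\mathcal{H}_\theta}+\|f\|_{L^1(0,\theta;L^2(0,L))}\bigr)+C_2C(\theta^{1/2}+\theta^{1/3})r^2$. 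Choosing $r=2C_2\bigl(\|y_0\|_{L^2(0,L)}+\|\mathbf{h}\|_{\mathcal{H}_\theta}+\|f\|_{L^1(0,\theta;L^2(0,L))}\bigr)$ and then $\theta$ small enough that $C_2C(\theta^{1/2}+\theta^{1/3})r\le\tfrac12$ forces $\Gamma(B)\subset B$. For the contraction I would use the splitting
\[
z_1z_{1,x}-z_2z_{2,x}=z_1(z_1-z_2)_x+(z_1-z_2)z_{2,x},
\]
apply Lemma \ref{ywx} to each term, and conclude, since $\Gamma z_1-\Gamma z_2$ solves \eqref{glp} with zero data and source $-(z_1z_{1,x}-z_2z_{2,x})$, that $\|\Gamma z_1-\Gamma z_2\|_{\mathcal{Z}_\theta}\le 2C_2C(\theta^{1/2}+\theta^{1/3})r\,\|z_1-z_2\|_{\mathcal{Z}_\theta}<\tfrac12\|z_1-z_2\|_{\mathcal{Z}_\theta}$. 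Banach's fixed point theorem then produces a unique $y\in B$ solving \eqref{gnp} on $[0,\theta]$, and feeding $f-yy_x$ back into the sharp trace part of Proposition \ref{wgproblem} gives the claimed trace bound, the quadratic term there reflecting $\|yy_x\|_{L^1(0,\theta;L^2)}\lesssim\|y\|_{\mathcal{Z}_\theta}^2$.

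The step I expect to be the real obstacle is passing from $[0,\theta]$ to the full interval $[0,T]$. In the linear Proposition \ref{wgproblem} the length $\theta$ depended only on $\|a\|_{\mathcal{Z}_T}$ through \eqref{eq 8}, so one could iterate with a fixed step; here the quadratic term couples $\theta$ to the \emph{size} of the data, so a naive iteration risks seeing the existence time collapse. To close this I would derive an a priori bound on $\|y(\cdot,t)\|_{L^2(0,L)}$ by the energy method, multiplying \eqref{gnp} by $y$ and integrating in $x$; the delicate points are the boundary contributions coming from $y_{xxx}$ and from the cubic term $\int_0^L y^2y_x\,dx=\tfrac13\bigl(y(L,t)^3-y(0,t)^3\bigr)$, which are not controlled by the $L^2$ norm alone and must be absorbed through the sharp trace estimates of Proposition \ref{wgproblem} together with the conditions $y_{xx}(0,t)=y_{xx}(L,t)=0$. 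Once $\|y\|_{\mathcal{Z}_T}$ is controlled a priori, the local solution extends to $[0,T]$ by continuation and the global estimate with constant $C_3$ follows.

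Finally, the local Lipschitz continuity of the solution map $S$ would follow from the same circle of ideas. Given data of size $<\lambda$, the difference $S(y_0,\mathbf{h},f_0)-S(y_1,\mathbf{g},f_1)$ solves \eqref{glp} with drift $a$, data equal to the differences of the data, and source equal to the difference of the two nonlinear terms. Splitting that difference as above and invoking Lemma \ref{ywx} bounds the source by $2C(\theta^{1/2}+\theta^{1/3})\lambda\,\|S(y_0,\mathbf{h},f_0)-S(y_1,\mathbf{g},f_1)\|_{\mathcal{Z}_\theta}$; choosing $\theta=\theta(\lambda,\|a\|_{\mathcal{Z}_T})$ so this coefficient times $C_2$ is at most $\tfrac12$ lets Proposition \ref{wgproblem} absorb the source into the left-hand side and yields the Lipschitz estimate on $[0,\theta]$ with a constant depending only on $\lambda$, $T$ and $\|a\|_{\mathcal{Z}_T}$. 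Iterating over the finitely many subintervals of length $\theta$ covering $[0,T]$ then produces the stated global constant $L_\lambda$.
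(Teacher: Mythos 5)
Your existence argument coincides with the paper's: the same map $\Gamma z=$ solution of \eqref{glp} with drift $a$ and source $f-zz_x$, a ball of radius $r$ proportional to the size of the data, a short time $\theta$ chosen so that $C_2C(\theta^{1/2}+\theta^{1/3})r$ is small, and a contraction via Lemma \ref{ywx}; the paper uses the symmetric splitting $yy_x-ww_x=\tfrac12\bigl[(y+w)_x(y-w)+(y+w)(y-w)_x\bigr]$ where you use $z_1(z_1-z_2)_x+(z_1-z_2)z_{2,x}$, which is immaterial. You diverge in two places. First, for the continuation to $[0,T]$ the paper does not perform the energy estimate you sketch: it simply invokes a ``standard continuation extension argument,'' asserting that $\theta$ depends only on $a$ (which is not literally accurate, since $\theta$ is tied to $r$ through \eqref{eq 14} and hence to the data); your worry here is legitimate, but be aware that the cubic boundary terms $\tfrac13\bigl(y(L,t)^3-y(0,t)^3\bigr)$ and the traces coming from $y_{xxx}$ under these Neumann-type conditions make the energy route nontrivial, and the paper does not carry it out. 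Second, for the local Lipschitz continuity the paper has a cleaner device than your absorb-and-iterate scheme: since $yy_x-uu_x=\bigl[\tfrac12(y+u)(y-u)\bigr]_x$, the difference $w=y-u$ solves the \emph{linear} problem \eqref{glp} with the modified drift $a+\tfrac12(y+u)$ and data equal to the differences of the data, so Proposition \ref{wgproblem} applies in one shot on all of $[0,T]$ with a constant controlled by $\|a\|_{\mathcal{Z}_T}+\tilde{C}_3\lambda$ (via the a priori bound \eqref{eq 15} and Remark \ref{mia}); this simultaneously yields uniqueness in all of $\mathcal{Z}_T$ rather than only in the ball $B$, a point your write-up leaves implicit. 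Both of your alternatives can be made to work, but the modified-drift trick is what lets the paper avoid any iteration over subintervals in the stability estimate.
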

\begin{proof} We will proceed as follows: First, we will show the existence of a solution and obtain the desired estimates. Secondly, we will get an estimate that provides the uniqueness of the solution and guarantees that $S$ is locally Lipschitz continuous. 

To do that, consider $y_0\in L^2(0,L)$, ${\bf h}\in \mathcal{H}_T$ and $f \in L^1(0,T;L^2(0,L))$. Let $\theta$ satisfying $0<\theta\leq T$ and define the map $\Gamma:\mathcal{Z}_\theta\rightarrow\mathcal{Z}_\theta$ in the following way: For $y \in \mathcal{Z}_\theta$, pick $\Gamma y$ as solution of the following problem
\begin{align*}
	\left\{
	\begin{array}{ll}
		u_t+(au)_x+u_{xxx}=f-yy_x,&\ \text{ in }(0,L)\times(0,\theta),\\
		u_{xx}(0,t)=h_1(t),\ u_{x}(L,t)=h_2(t),\ u_{xx}(L,t)=h_3(t),&\ \text{ in }(0,\theta),\\
		u(x,0)=y_0(x),&\ \text{ in }(0,L).
	\end{array}
	\right.
\end{align*}
Consider the set
\begin{align*}
	B=\left\{y \in \mathcal{Z}_\theta;\ \|y\|_{\mathcal{Z}_\theta}\leq r\right\},
\end{align*}
with $r>0$ to be determined later. Using Proposition \ref{wgproblem} and Lemma \ref{ywx} we obtain, for any $y \in B$,
\begin{equation}\label{eq 13}
\begin{aligned}
	\|\Gamma y\|_{\mathcal{Z}_\theta}&\leq C_2\left(\|y_0\|_{L^2(0,L)}+\|{\bf h}\|_{\mathcal{H}_\theta}+\|f\|_{L^1(0,\theta;L^2(0,L))}+\|yy_x\|_{L^1(0,\theta;L^2(0,L))}\right)\\
	&\leq C_2\left(\|y_0\|_{L^2(0,L)}+\|{\bf h}\|_{\mathcal{H}_\theta}+\|f\|_{L^1(0,\theta;L^2(0,L))}\right)+C_2C\left(\theta^\frac{1}{2}+\theta^\frac{1}{3}\right)\|y\|_{\mathcal{Z}_T}^2.\\
\end{aligned}
\end{equation}
Choosing
\begin{align*}
r=4C_2\left(\|y_0\|_{L^2(0,L)}+\|{\bf h}\|_{\mathcal{H}_\theta}+\|f\|_{L^1(0,\theta;L^2(0,L))}\right)
\end{align*}
and $\theta$ satisfying
\begin{align}\label{eq 14}
C_2C\left(\theta^\frac{1}{2}+\theta^\frac{1}{3}\right)r<\frac{1}{4},
\end{align}
inequality \eqref{eq 13} give us
\begin{align*}
\|\Gamma\|_{\mathcal{Z}_T}&\leq \frac{r}{4}+\frac{r}{4}=\frac{r}{2}<r,
\end{align*}
that is, $\Gamma(B)\subset B$. Moreover given $y,w\in B$, $\Gamma y-\Gamma w$ solves
\begin{align*}
	\left\{
	\begin{array}{ll}
		u_t+(au)_x+u_{xxx}=-yy_x+ww_x,&\ \text{ in }(0,L)\times(0,\theta),\\
		u_{xx}(0,t)=0,\ u_{x}(L,t)=0,\ u_{xx}(L,t)=0,&\ \text{ in }(0,\theta),\\
		u(x,0)=0,&\ \text{ in }(0,L).
	\end{array}
	\right.
\end{align*}
Therefore, from Proposition \ref{wgproblem},
\begin{align*}
	\|\Gamma y-\Gamma w\|_{\mathcal{Z}_\theta}\leq C_2\|yy_x-ww_x\|_{L^1(0,\theta;L^2(0,L))}.
\end{align*}
Note that
\begin{align*}
yy_x-ww_x=\frac{1}{2}\big[(y+w)_x(y-w)+(y+w)(y-w)_x\big].
\end{align*}
Then, thanks to the Lemma \ref{ywx}, we get that
\begin{align*}
&\|yy_x-ww_x\|_{L^1(0,\theta; L^2(0,L))}\\
&\leq \frac{1}{2}\big[\|(y+w)_x(y-w)\|_{L^1(0,\theta; L^2(0,L))}+\|(y+w)(y-w)_x\|_{L^1(0,\theta; L^2(0,L))}\big]\\
&\leq \frac{1}{2}\left[C\left(\theta^{\frac{1}{2}}+\theta^{\frac{1}{3}}\right)\|y+w\|_{\mathcal{Z}_\theta}\|y-w\|_{\mathcal{Z}_\theta}+C\left(\theta^{\frac{1}{2}}+\theta^{\frac{1}{3}}\right)\|y+w\|_{\mathcal{Z}_\theta}\|y-w\|_{\mathcal{Z}_\theta}\right]\\
&=C\left(\theta^{\frac{1}{2}}+\theta^{\frac{1}{3}}\right)\|y+w\|_{\mathcal{Z}_\theta}\|y-w\|_{\mathcal{Z}_\theta}\\
&\leq 2C\left(\theta^{\frac{1}{2}}+\theta^{\frac{1}{3}}\right)r\|y-w\|_{\mathcal{Z}_\theta}
\end{align*}
and, using \eqref{eq 14} yields
\begin{align*}
\|\Gamma y-\Gamma w\|_{\mathcal{Z}_\theta}\leq 2C_2C\left(\theta^{\frac{1}{2}}+\theta^{\frac{1}{3}}\right)r\|y-w\|_{\mathcal{Z}_\theta}<\frac{1}{2}\|y-w\|_{\mathcal{Z}_\theta}.
\end{align*}
Hence, $\Gamma:B\rightarrow B$ is a contraction so that, by Banach's fixed point theorem, $\Gamma$ has a fixed point $y\in B$ which is a solution to the problem \eqref{gnp} in $[0,\theta]$, corresponding to data $(y_0,{\bf h},f)$. 

Furthermore, due to the inequalities \eqref{eq 13} and \eqref{eq 14}, we have
\begin{align*}
	\|y\|_{\mathcal{Z}_\theta}\leq C_2\left(\|y_0\|_{L^2(0,L)}+\|{\bf h}\|_{\mathcal{H}_\theta}+\|f\|_{L^1(0,\theta;L^2(0,L))}\right)+\frac{1}{4}\|y\|_{\mathcal{Z}_\theta},
\end{align*}
and, therefore
\begin{align*}
	\|y\|_{\mathcal{Z}_\theta}\leq \frac{4}{3}C_2\left(\|y_0\|_{L^2(0,L)}+\|{\bf h}\|_{\mathcal{H}_\theta}+\|f\|_{L^1(0,\theta;L^2(0,L))}\right).
\end{align*}
Since $\theta$ depends only on $a$, with a standard continuation extension argument, the solution $y$ can be extended to interval $[0,T]$ and the following estimate holds
\begin{align}\label{eq 15}
	\|y\|_{\mathcal{Z}_T}\leq \tilde{C}_3\left(\|y_0\|_{L^2(0,L)}+\|{\bf h}\|_{\mathcal{H}_T}+\|f\|_{L^1(0,T;L^2(0,L))}\right),
\end{align}
for some suitable constant $\tilde{C}_3>0$ which only depends on $T$ and $a$. Now, using one more time the Proposition \ref{wpgnp} together with Lemma \ref{ywx} we obtain
\begin{align*}
&\sum_{j=0}^{2}\left\|\partial_x^jy\right\|_{L^\infty_x(0,L;H^{(1-j)/3}(0,T))}\\
&\leq C_2\left(\|y_0\|_{L^2(0,L)}+\|{\bf h}\|_{\mathcal{H}_T}+\|f\|_{L^1(0,T;L^2(0,L))}+\|yy_x\|_{L^1(0,T;L^2(0,L))}\right)\\
&\leq  C_2\left(\|y_0\|_{L^2(0,L)}+\|{\bf h}\|_{\mathcal{H}_T}+\|f\|_{L^1(0,T;L^2(0,L))}\right)+C_2C\left(\theta^\frac{1}{2}+\theta^\frac{1}{3}\right)\|y\|_{\mathcal{Z}_T}^2.
\end{align*}
By \eqref{eq 15} it follows that
\begin{align*}
&\sum_{j=0}^{2}\left\|\partial_x^jy\right\|_{L^\infty_x(0,L;H^{(1-j)/3}(0,T))}\\
&\leq C_2\left(\|y_0\|_{L^2(0,L)}+\|{\bf h}\|_{\mathcal{H}_T}+\|f\|_{L^1(0,T;L^2(0,L))}\right)\\
&+C_2C\left(\theta^\frac{1}{2}+\theta^\frac{1}{3}\right)\tilde{C}^2\left(\|y_0\|_{L^2(0,L)}+\|{\bf h}\|_{\mathcal{H}_T}+\|f\|_{L^1(0,T;L^2(0,L))}\right)^2.
\end{align*}
Choosing
\begin{align*}
C_3:=\max\left\{\tilde{C}_3, C_2, C_2C\left(\theta^\frac{1}{2}+\theta^\frac{1}{3}\right)\tilde{C}^2\right\}
\end{align*}
we obtain the desired estimates.

Now, consider $$
(y_0,{\bf h},f_0),(y_1,{\bf g},f_1)\in L^2(0,L)\times \mathcal{H}_T\times L^1(0,T;L^2(0,L))
$$
with
$$
\begin{aligned}
	\|y_0\|_{L^2(0,L)}+\|{\bf h}\|_{\mathcal{H}_T}+\|f_0\|_{L^1(0,T;L^2(0,L))}<\lambda,\\
	\|y_1\|_{L^2(0,L)}+\|{\bf g}\|_{\mathcal{H}_T}+\|f_1\|_{L^1(0,T;L^2(0,L))}<\lambda.
\end{aligned}
$$
Write ${\bf h}=(h_1,h_2,h_3)$ and ${\bf g}=(g_1,g_2,g_3)$. Let $y$ and $u$ be solutions of
\begin{align*}
	\left\{
	\begin{array}{ll}
		y_t+(ay)_x+y_{xxx}+yy_x=f_0,&\ \text{ in }(0,L)\times(0,T),\\
		y_{xx}(0,t)=h_1(t),\ y_{x}(L,t)=h_2(t),\ y_{xx}(L,t)=h_3(t),&\ \text{ in }(0,T),\\
		y(x,0)=y_0(x)&\ \text{ in }(0,L),
	\end{array}
	\right.
\end{align*}
and
\begin{align*}
	\left\{
	\begin{array}{ll}
		u_t+(au)_x+u_{xxx}+uu_x=f_1,&\ \text{ in }(0,L)\times(0,T),\\
		u_{xx}(0,t)=g_1(t),\ u_{x}(L,t)=g_2(t),\ u_{xx}(L,t)=g_3(t),&\ \text{ in }(0,T),\\
		u(x,0)=y_1(x)&\ \text{ in }(0,L),
	\end{array}
	\right.
\end{align*}
respectively. Then, $w=y-u$ solves the problem
\begin{align*}
	\left\{
	\begin{array}{ll}
		w_t+\big[\left(a+\frac{1}{2}(y+u)\right)w\big]_x+w_{xxx}=f_0-f_1,&\ \text{ in }(0,L)\times(0,T,)\\
		w_{xx}(0,\cdot)=h_1-g_1,\ w_{x}(L,\cdot)=h_2-g_2,\ w_{xx}(L,\cdot)=h_3-g_3,&\ \text{ in }(0,T),\\
		w(x,0)=y_0(x)-y_1(x),&\ \text{ in }(0,L).
	\end{array}
	\right.
\end{align*}
From Proposition \ref{wgproblem} it follows that
\begin{align}\label{eq 16}
\|y-u\|_{\mathcal{Z}_T}\leq D\left(\|y_0-y_1\|_{L^2(0,L)}+\|{\bf h}-{\bf g}\|_{\mathcal{H}_T}+\|f_0-f_1\|_{L^1(0,T;L^2(0,L))}\right),
\end{align}
where $D$ is a constant which depends on $T$ and $\|a+\frac{1}{2}(y+u)\|_{\mathcal{Z}_T}$. But, using \eqref{eq 15} we have
\begin{align*}
\|a+\frac{1}{2}(y+u)\|_{\mathcal{Z}_T}\leq& \|a\|_{\mathcal{Z}_T}+\frac{1}{2}\left(\|y\|_{\mathcal{Z}_T}+\|u\|_{\mathcal{Z}_T}\right)\\
\leq& \|a\|_{\mathcal{Z}_T}+\frac{\tilde{C}_3}{2}\left(\|y_0\|_{L^2(0,L)}+\|{\bf h}\|_{\mathcal{H}_T}+\|f_0\|_{L^1(0,T;L^2(0,L))}\right)\\
&+\frac{\tilde{C}_3}{2}\left(\|y_1\|_{L^2(0,L)}+\|{\bf g}\|_{\mathcal{H}_T}+\|f_1\|_{L^1(0,T;L^2(0,L))}\right)
\end{align*}
and, consequently,
\begin{align*}
\|a+\frac{1}{2}(y+u)\|_{\mathcal{Z}_T}&\leq \|a\|_{\mathcal{Z}_T}+\tilde{C}_3\lambda.
\end{align*}
Hence, the constant $D$ can be chosen depending only on $T$ and $\|a\|_{\mathcal{Z}_T}$ (and also on $\lambda$), so that \eqref{eq 16} gives us the uniqueness of the solution, which turns the solution map $S$ well-defined. Moreover, from \eqref{eq 16}
\begin{equation*}
\begin{split}
\|S(y_0,{\bf h},f_0)-S(y_1,{\bf g},f_1)\|_{\mathcal{Z}_T}\leq &D \left(\|y_0-y_1\|_{L^2(0,L)}\right.\\&\left.+\|{\bf h}-{\bf g}\|_{\mathcal{H}_T}+\|f_0-f_1\|_{L^1(0,T;L^2(0,L))}\right),
\end{split}
\end{equation*}
which concludes the proof.
\end{proof}

Finally, the following Lemma, whose proof can be found in \cite{Rosier}, will be useful in the next section.

\vspace{0.2cm}

\begin{lemma}[Rosier \cite{Rosier}]\label{yyx}
	If $y\in \mathcal{Z}_T$ then $yy_x \in L^1(0,T;L^2(0,L))$ and the map
	\begin{align*}
		\begin{array}{rcl}
			\mathcal{Z}_T&\longrightarrow&L^1(0,T;L^2(0,L))\\
			y&\longmapsto&yy_x,
		\end{array}
	\end{align*}
	is continuous. More precisely, for every $y,z\in \mathcal{Z}_T$ we have that
\begin{equation*}
\begin{split}
		\|yy_x-zz_x\|_{L^1(0,T;L^2(0,L))}\leq &C_4\left(\|y\|_{L^2(0,T;H^1(0,L))}\right.\\&\left.+\|z\|_{L^2(0,T;H^1(0,L))}\right)\|y-z\|_{L^2(0,T;H^1(0,L))},
\end{split}
\end{equation*}
	where $C_4$ is a positive constant that depends only on $L$.
\end{lemma}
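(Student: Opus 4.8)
The plan is to reduce everything to the single Lipschitz estimate, since both the claim $yy_x \in L^1(0,T;L^2(0,L))$ and the continuity of the map follow from it: the former by taking $z=0$, and the latter directly from the definition of continuity. The starting point is the algebraic identity
\[
yy_x - zz_x = \tfrac{1}{2}(y^2-z^2)_x = \tfrac{1}{2}\big[(y+z)_x(y-z) + (y+z)(y-z)_x\big],
\]
which isolates the difference $y-z$ in each summand and is the same splitting already exploited in the proof of Proposition \ref{wpgnp}.

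Next I would estimate each summand pointwise in $t$ in the $L^2(0,L)$-norm by Hölder's inequality, placing one factor in $L^2$ and the other in $L^\infty$. For the first term, $\|(y+z)_x(y-z)\|_{L^2(0,L)} \leq \|(y+z)_x\|_{L^2(0,L)}\|y-z\|_{L^\infty(0,L)}$; for the second, $\|(y+z)(y-z)_x\|_{L^2(0,L)} \leq \|y+z\|_{L^\infty(0,L)}\|(y-z)_x\|_{L^2(0,L)}$. The key tool is the one-dimensional Sobolev embedding $H^1(0,L)\hookrightarrow L^\infty(0,L)$, whose embedding constant depends only on $L$; applying it to the $L^\infty$ factors gives, in both cases, the bound $C_L\|y+z\|_{H^1(0,L)}\|y-z\|_{H^1(0,L)}$, absorbing the remaining $L^2$ factors into the full $H^1$-norms.

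Then I would integrate in time and apply the Cauchy--Schwarz inequality on $(0,T)$ to the product $\|y+z\|_{H^1}\|y-z\|_{H^1}$, which yields
\[
\int_0^T \|y+z\|_{H^1(0,L)}\|y-z\|_{H^1(0,L)}\,dt \leq \|y+z\|_{L^2(0,T;H^1(0,L))}\|y-z\|_{L^2(0,T;H^1(0,L))}.
\]
Combining the contributions of the two summands and using the triangle inequality $\|y+z\|_{L^2(0,T;H^1)} \leq \|y\|_{L^2(0,T;H^1)} + \|z\|_{L^2(0,T;H^1)}$ produces the stated estimate with a constant $C_4$ depending only on $L$. Finally, choosing $z=0$ gives $\|yy_x\|_{L^1(0,T;L^2)} \leq C_4\|y\|_{L^2(0,T;H^1)}^2 < \infty$, so that $yy_x \in L^1(0,T;L^2(0,L))$ for every $y\in\mathcal{Z}_T$, while the Lipschitz bound is precisely the asserted continuity of the map $y\mapsto yy_x$.

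There is no genuine obstacle here; the argument is a direct computation. The only point requiring care is to verify that the Sobolev constant for $H^1(0,L)\hookrightarrow L^\infty(0,L)$, and hence $C_4$, depends on $L$ alone and not on $T$ or on the functions involved. This is immediate from the standard interpolation (Agmon-type) estimate on a bounded interval, which gives $\|u\|_{L^\infty(0,L)} \leq C_L\|u\|_{H^1(0,L)}$ uniformly over $u\in H^1(0,L)$, so the constant propagates through the time integration unchanged.
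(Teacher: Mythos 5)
Your proof is correct. The paper itself gives no argument for this lemma (it defers entirely to Rosier \cite{Rosier}), and what you supply is the standard proof: the splitting $yy_x-zz_x=\tfrac12\big[(y+z)_x(y-z)+(y+z)(y-z)_x\big]$ is exactly the one the paper uses in the proof of Proposition \ref{wpgnp}, and your combination of H\"older, the one-dimensional embedding $H^1(0,L)\hookrightarrow L^\infty(0,L)$, and Cauchy--Schwarz in time correctly yields a constant depending only on $L$ (in contrast to Lemma \ref{ywx}, whose constant carries a factor $T^{1/2}+T^{1/3}$), which is precisely the form asserted in the statement.
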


\vspace{0.2cm}

\begin{remark}\label{mia}We end this subsection with the following remarks.
\begin{enumerate}
	\item For every $a\in \mathcal{Z}_T$, the Proposition \ref{wgproblem} give us the well-definition for the solution operator $$\Lambda_a:L^2(0,L)\times \mathcal{H}_T\times L^1(0,T;L^2(0,L))\rightarrow\mathcal{Z}_T$$
	where, for each $(y_0,{\bf h},f)\in L^2(0,L)\times \mathcal{H}_T\times L^1(0,T;L^2(0,L))$, $\Lambda_a(y_0,{\bf h},f)$ is the corresponding solution to the problem \eqref{glp}. Furthermore, the Proposition \ref{wpgnp} guarantees that $\Lambda_a$ is a bounded linear operator.
	\item Of course, the constant $C_2>0$ in the Proposition \ref{wgproblem} depends on $\|a\|_{\mathcal{Z}_T}$. However, given $M>0$, the same constant $C_2$ can be used for every $a\in \mathcal{Z}_T$ with $\|a\|_{\mathcal{Z}_T}\leq M$.
\end{enumerate}
\end{remark}

\section{Boundary controllability in the critical length}\label{sec3}

In this section, we study the controllability of the system
\begin{align}\label{ncp}
	\left\{
	\begin{array}{ll}
		y_t+y_x+y_{xxx}+yy_x=0,&\ \text{ in }(0,L)\times(0,T),\\
		y_{xx}(0,t)=0,\ y_{x}(L,t)=h(t),\ y_{xx}(L,t)=0,&\ \text{ in }(0,T),\\
		y(x,0)=y_0(x)&\ \text{ in }(0,L),
	\end{array}
	\right.
\end{align}
when $L$ is a critical length. We will use the return method together with the fixed point argument to ensure the controllability of the system \eqref{ncp} when $L \in \mathcal{R}_c$. Before presenting the proof of the main result, let us give a preliminary result that is important in our analysis.

\subsection{An auxiliary result} 

Given $c\ne -1$, the set of critical lengths for the linearization of \eqref{ncp} around $c$ is
\begin{align*}
\mathcal{R}_c:=\left\{\frac{2\pi}{\sqrt{3(c+1)}}\sqrt{m^2+ml+l^2}\ ;\ m,l\in \mathbb{N}^*\right\}\cup\left\{\frac{m\pi}{\sqrt{c+1}};\ m\in \mathbb{N}^*\right\}.
\end{align*}
As mentioned before, when $L \in \mathcal{R}_c$, the linear system
\begin{align}\label{lac}
	\left\{
	\begin{array}{lll}
		y_t+(1+c)y_x+y_{xxx}=0,&\text{ in }(0,L)\times (0,T),\\
		y_{xx}(0,t)=0,\ y_x(L,t)=h(t),\ y_{xx}(L,t)=0,&\text{ in }(0,T),\\
		y(x,0)=y_0(x),&\text{ in }(0,L),
	\end{array}
	\right.
\end{align}
it is not exactly controllable. So, in this section, we first give the proof of Theorem \ref{c-ap}. 

\vspace{0.2cm}

\begin{proof}\textbf{(Proof of Theorem \ref{c-ap}.)}
We will split the proof into two cases.

\vspace{0.2cm}

\noindent{\textbf{First case:}} $L=2\pi\frac{\sqrt{j^2+jk+k^2}}{\sqrt{3(c+1)}}$ for some $(j,k) \in \mathbb{N}^*\times \mathbb{N}^*$.

\vspace{0.2cm}

Let $d\ne -1$ and suppose that $L\in \mathcal{R}_d$. Then
\begin{equation}\label{eq-10}
	L=2\pi\frac{\sqrt{m^2+ml+l^2}}{\sqrt{3(d+1)}};\ m,l\in \mathbb{N}^*
\end{equation}
or
\begin{equation}\label{eq-11}
	L=\frac{m\pi}{\sqrt{d+1}};\ m \in \mathbb{N}^*.
\end{equation}

If \eqref{eq-10} is the case,  we have that 
$$
2\pi\frac{\sqrt{j^2+jk+k^2}}{\sqrt{3(c+1)}}=2\pi\frac{\sqrt{m^2+ml+l^2}}{\sqrt{3(d+1)}},
$$
which implies
$$
d=\frac{(c+1)(m^2+ml+l^2)}{j^2+jk+k^2}-1
$$
with $m,l \in \mathbb{N}^*$. 

Otherwise, if \eqref{eq-11} holds, so
$$
2\pi\frac{\sqrt{j^2+jk+k^2}}{\sqrt{3(c+1)}}=\frac{m\pi}{\sqrt{d+1}}
$$
giving that
$$
d=\frac{3m^2(c+1)}{4(j^2+jk+k^2)}-1
$$
where $m\in \mathbb{N}^*$. Therefore, if $L\in \mathcal{R}_d$ then we necessarily have $d \in \mathcal{A}_1\cup \mathcal{B}_1$. Here, 
$$
\mathcal{A}_1:=\left\{
\frac{(c+1)(m^2+ml+l^2)}{j^2+jk+k^2}-1,\ m,l\in \mathbb{N}^*
\right\}
$$
and
$$\mathcal{B}_1:=\left\{\frac{3m^2(c+1)}{4(j^2+jk+k^2)}-1;\ m \in \mathbb{N}^*\right\}.
$$

We are now in a position to prove that $\mathcal{A}_1\cup\mathcal{B}_1$ is discrete. To do that, consider $x,y \in \mathcal{A}_1$ such that $x\ne y$ in the form
$$
x=\frac{(c+1)(m_1^2+m_1l_1+l_1^2)}{j^2+jk+k^2}-1
$$
and 
$$ y=\frac{(c+1)(m_2^2+m_2l_2+l_2^2)}{j^2+jk+k^2}-1,
$$
with $m_1,l_1,m_2,l_2\in \mathbb{N}^*$. Note that
$$
x-y=\frac{c+1}{j^2+jk+k^2}\left[(m_1^2+m_1l_1+l_1^2)-(m_2^2+m_2l_2+l_2^2)\right].
$$
Since $x\ne y$ we have
$$
(m_1^2+m_1l_1+l_1^2)\ne (m_2^2+m_2l_2+l_2^2).
$$
Thus
$$
\left|(m_1^2+m_1l_1+l_1^2)-(m_2^2+m_2l_2+l_2^2)\right|\geq 1
$$
so that
$$
d(x,y)\geq \frac{c+1}{j^2+jk+k^2},\ \forall x,y \in \mathcal{A}_1,\ x\ne y.
$$
Analogously, we get 
$$
d(x,y)\geq \frac{3(c+1)}{4(j^2+jk+k^2)},\ \forall\ x,y\in \mathcal{B}_1, x\ne y.
$$
Now, let $x \in \mathcal{A}_1$ and $y\in \mathcal{B}_1$ with $x\ne y$, as follow:
$$
x=\frac{(c+1)(m^2+ml+l^2)}{j^2+jk+k^2}-1
$$
and 
$$y=\frac{3p^2(c+1)}{4(j^2+jk+k^2)}-1,$$ 
where $m,l,p\in \mathbb{N}^*$. Observe that
\begin{equation*}
	\begin{split}
		x-y=&\frac{(c+1)(m^2+ml+l^2)}{j^2+jk+k^2}-\frac{3p^2(c+1)}{4(j^2+jk+k^2)}\\
		=&\frac{4(c+1)(m^2+ml+l^2)}{4(j^2+jk+k^2)}-\frac{3p^2(c+1)}{4(j^2+jk+k^2)}\\
		=&\frac{c+1}{4(j^2+jk+k^2)}\left[4(m^2+ml+l^2)-3p^2\right].
	\end{split}
\end{equation*}
Since $x\ne y$ we have that $4(m^2+ml+l^2)$ and $3p^2$ are distinct natural numbers so
$$
\left|4(m^2+ml+l^2)-3p^2\right|\geq 1
$$
and, consequently, we have
$$
d(x,y)\geq \frac{c+1}{4(j^2+jk+k^2)}.
$$
From all the above, we conclude that
$$
d(x,y)\geq \frac{c+1}{4(j^2+jk+k^2)},\ \forall\ x,y\in \mathcal{A}_1\cup \mathcal{B}_1,\ x\ne y,
$$
which implies that $\mathcal{A}_1\cup\mathcal{B}_1$ is discrete.

Note that $c \in \mathcal{A}_1\cup \mathcal{B}_1$ since
$$
c=\left[\frac{(c+1)(j^2+jk+k^2)}{j^2+jk+k^2}-1\right]\in \mathcal{A}_1.
$$
As any point in $\mathcal{A}_1\cup \mathcal{B}_1$ is isolated, there exists $\epsilon_1>0$ such that
$$
(c-\epsilon_1,c+\epsilon_1)\cap \left[\mathcal{A}_1\cup \mathcal{B}_1\right]=\{c\}.
$$
Therefore for $d \in (c-\epsilon_1,c+\epsilon_1)\backslash\{c\}$ we have that $d\notin\mathcal{A}_1\cup \mathcal{B}_1$ and, therefore, $L \notin \mathcal{R}_d$.

\vspace{0.2cm}

\noindent{\textbf{Second case:}} $L=\frac{k\pi}{\sqrt{c+1}}$ for some $k \in \mathbb{N}^*$.

\vspace{0.2cm}

Let $d\ne -1$ and suppose that $L\in \mathcal{R}_d$, that is, \eqref{eq-10} or \eqref{eq-11} holds. If \eqref{eq-10} holds, then
$$\frac{k\pi}{\sqrt{c+1}}=2\pi\frac{\sqrt{m^2+ml+l^2}}{\sqrt{3(d+1)}}$$ giving that
$$
d=\frac{(c+1)(m^2+ml+l^2)}{3k^2}-1,
$$
with $m,l\in \mathbb{N}^*$. If \eqref{eq-11} is the case, thus
$$
\frac{k\pi}{\sqrt{c+1}}=\frac{m\pi}{\sqrt{d+1}}
$$
so that
$$
d=\frac{(c+1)m^2}{k^2}-1.
$$
Therefore, if $L\in \mathcal{R}_d$ then we have necessarily  $d \in \mathcal{A}_2\cup \mathcal{B}_2$ where
$$
\mathcal{A}_2:=\left\{
\frac{(c+1)(m^2+ml+l^2)}{3k^2}-1;\ m,l\in \mathbb{N}^*
\right\}
$$
and
$$
\mathcal{B}_2:=\left\{\frac{(c+1)m^2}{k^2}-1;\ m \in \mathbb{N}^*\right\}.
$$

As done before, taking $x,y \in \mathcal{A}_2$ with $x\ne y$, such that 
$$x=\frac{(c+1)(m_1^2+m_1l_1+l_1^2)}{3k^2}-1$$
and $$y=\frac{(c+1)(m_2^2+m_2l_2+l_2^2)}{3k^2}-1,$$
where $m_1,l_1,m_2,l_2\in \mathbb{N}^*$,yields that 
$$
x-y=\frac{c+1}{3k^2}\left[(m_1^2+m_1l_1+l_1^2)-(m_2^2+m_2l_2+l_2^2)\right]
$$
and, as $x\ne y$ we have
$$\left|(m_1^2+m_1l_1+l_1^2)-(m_2^2+m_2l_2+l_2^2)\right|\geq 1.$$
Consequently,
$$
d(x,y)\geq \frac{c+1}{3k^2}, \forall x,y \in \mathcal{A}_2,\ x\ne y.
$$
In an analogous way, 
$$
d(x,y)\geq \frac{c+1}{k^2},\ \forall x,y \in \mathcal{B}_2,\ x\ne y
$$
and
$$
d(x,y)\geq \frac{c+1}{3k^2},\ \forall x\in \mathcal{A}_2,\ y\in \mathcal{B}_2,\ x\ne y.
$$
Therefore, 
$$
d(x,y)\geq \frac{c+1}{3k^2},\ \forall x,y \in \mathcal{A}_2\cup \mathcal{B}_2,\ x\ne y.
$$

Proceeding as in the first case, we conclude that there exists $\epsilon_2>0$ such that, for every $d \in (c-\epsilon_2,c+\epsilon_2)\backslash\{c\}$ we have $d\notin \mathcal{A}_2\cup\mathcal{B}_2$ so that $L \notin \mathcal{R}_d$. Considering $\epsilon_c:=\min\{\epsilon_1,\epsilon_2\}$, thanks thanks to the \cite[Proposition 3.6]{CaCaZh} and Theorem \ref{control}, the proof is completed. 
\end{proof}

\begin{remark}
It can seen that, for $c\ne -1$ and $L \in \mathcal{R}_c$, we must have $0<\epsilon_c\leq \frac{\pi^2}{3L^2}$. Hence, $\epsilon_c$ does not depend on the control time $T$ and $\epsilon_c\rightarrow 0$ when $L\rightarrow \infty$. On the other hand, for $L$ close enough to 0, the tolerance $\epsilon_c$ can be considered extremely large.
\end{remark}

\subsection{Construction of the trajectories} 

In this subsection, for simplicity, we will consider  $c=0$ in $\mathcal{R}_c$. Let $T>0$ and note that by Theorem \ref{c-ap} there exist $\epsilon_0>0$ such that for every $d\in (0,\epsilon_0)$, the system \eqref{lac} is exactly controllable around $d$. Henceforth, we use $C,C_1,C_2$ and $C_3$ to denote the positive constants given in Proposition \ref{wpgnp}, corresponding to $T$. One can see that, for every $\tau \in [0,T]$, the same constants can be used to apply the corresponding results for $\|\cdot\|_{\mathcal{Z}_\tau}$ so, for simplicity, we will consider now on $\tau=\frac{T}{3}$. 

The first result of this section ensures that we can construct solutions for the system \eqref{ncp} which starts close to $0$ (left-hand side of the spatial domain) and achieves some non-null equilibrium in a certain time $T/3$. The result is shown by a fixed-point argument.

\vspace{0.2cm}

\begin{proposition}\label{d-0to-c}
	There exist $\delta_1>0$ such that, for every $d \in (0,\delta_1)$ and  $y_0\in L^2(0,L)$ with $\|y_0\|_{L^2(0,L)}<\delta_{1},$
	there exists $h_1 \in L^2(0,T/3)$ such that, the solution of \eqref{ncp} for $t \in [0,T/3]$, satisfies
	$$
	y(\cdot,T/3)=d.
	$$
\end{proposition}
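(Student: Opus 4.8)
The plan is to use that the constant function $y\equiv d$ is a steady state of \eqref{ncp} and to linearize around it, which moves the problem off the critical set. Indeed, for $y$ constant every term $y_x,\,y_{xxx},\,yy_x$ vanishes and the associated control is $h\equiv 0$, so $y\equiv d$ is an equilibrium. Setting $y=d+z$, driving $y$ from $y_0$ to the constant $d$ at time $T/3$ is equivalent to steering $z$ from $z_0:=y_0-d$ to $0$, where $z$ solves
\begin{equation*}
z_t+(1+d)z_x+z_{xxx}+zz_x=0,\quad z_{xx}(0,\cdot)=0,\ z_x(L,\cdot)=h_1,\ z_{xx}(L,\cdot)=0,
\end{equation*}
with $z(\cdot,0)=z_0$; note that $h=h_1=z_x(L,\cdot)$ since $d$ is constant, and that $\|z_0\|_{L^2(0,L)}\le \|y_0\|_{L^2(0,L)}+d\sqrt{L}$ is small once $\delta_1$ is. The linearization of this $z$-equation is precisely \eqref{lac} with $c=d$, which by Theorem \ref{c-ap} is exactly controllable in $L^2(0,L)$, because $d\in(0,\delta_1)\subset(0,\epsilon_0)$ guarantees $L\notin\mathcal{R}_d$.

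With linear controllability in hand, I would absorb the nonlinearity by a fixed point argument, realizing the null-controllability of the nonlinear $z$-equation (equivalently, local exact controllability of \eqref{ncp} around the equilibrium $d$, in the spirit of Theorem \ref{control}). Let $\Psi_d$ be the bounded operator which, with $z_0$ as fixed initial datum and given a source $f\in L^1(0,T/3;L^2(0,L))$, returns a control together with the corresponding solution of the linear system \eqref{lac} (with $c=d$) steering $z_0$ to $0$ at $t=T/3$, and define $\Gamma(z):=\Psi_d(-zz_x)$. A fixed point of $\Gamma$ is exactly the sought controlled trajectory, with control $h_1:=z_x(L,\cdot)\in L^2(0,T/3)$. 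To run Banach's theorem on a ball $B_r\subset\mathcal{Z}_{T/3}$, I would use Proposition \ref{wgproblem} together with Lemma \ref{ywx} (or Lemma \ref{yyx}) to bound $\|zz_x\|_{L^1(0,T/3;L^2(0,L))}\lesssim\|z\|_{\mathcal{Z}_{T/3}}^2$ and the analogous Lipschitz estimate for $zz_x-\tilde z\tilde z_x$; the well-posedness constants are uniform for the bounded coefficients in play by Remark \ref{mia}. Choosing $r$ and $\delta_1$ small enough then makes $\Gamma$ a self-map and a contraction on $B_r$.

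The delicate point, and what I expect to be the main obstacle, is the \emph{uniformity in $d$}: a single $\delta_1$ must serve every $d\in(0,\delta_1)$ and every $\|y_0\|_{L^2(0,L)}<\delta_1$. Since $d=0$ is a critical value ($L\in\mathcal{R}_0$) whereas each $d\in(0,\delta_1)$ is not, the control operator $\Psi_d$ degenerates as $d\to 0^+$ — its norm, which dictates the admissible size of $z_0$ and hence the contraction radius, blows up because $L$ sits at distance $O(d)$ from $\mathcal{R}_d$. Consequently the radius cannot be taken uniform by the naive estimate, and the argument must quantify how the control cost depends on $d$. To close this gap I would rely on the quantitative information underlying Theorem \ref{c-ap} (the discreteness of the exceptional set of equilibria together with the estimates of \cite[Proposition 3.6]{CaCaZh}) to keep the control cost under control for $d$ in the relevant range, and fix $\delta_1$ accordingly; this calibration is the step demanding the most care.
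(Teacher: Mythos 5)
Your reduction (set $y=d+z$ and steer $z$ to $0$ using the controllability of \eqref{lac} with $c=d$) is not the paper's argument, and the obstacle you flag at the end is a genuine gap that your proposal does not close. As you observe, the statement requires a single $\delta_1$ valid for every $d\in(0,\delta_1)$, while the cost of the control operator $\Psi_d$ associated with the linearization around $d$ blows up as $d\to 0^+$, since $L\in\mathcal{R}_0$ and $\operatorname{dist}(L,\mathcal{R}_d)=O(d)$. The escape route you suggest --- extracting quantitative bounds on $\|\Psi_d\|$ from ``the information underlying Theorem \ref{c-ap}'' --- is not available: Theorem \ref{c-ap} and \cite[Proposition 3.6]{CaCaZh} are purely qualitative (they assert $L\notin\mathcal{R}_d$ and the existence of a bounded control operator, with no uniform estimate in $d$). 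There is a second, related difficulty you do not address: even for a single fixed $d$, your scheme needs the contraction ball for $z_0=y_0-d$ to contain data of size $\|y_0-d\|\ge d\sqrt{L}-\|y_0\|$, so the admissible radius around $d$ must be compared with $d\sqrt{L}$, which again calls for quantitative information you do not have.

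The paper's proof avoids both issues by decoupling the linearization point from the target. One fixes once and for all an auxiliary constant $\varepsilon\in(0,\epsilon_0)$ (independent of $d$), for which $L\notin\mathcal{R}_\varepsilon$ by Theorem \ref{c-ap}, and rewrites the equation as $y_t+(1+\varepsilon)y_x+y_{xxx}=\varepsilon y_x-yy_x$, treating both the nonlinearity and the drift mismatch $\varepsilon y_x$ as source terms. The fixed-point map is built from the single control operator $\Psi^\varepsilon$ of \cite[Proposition 3.6]{CaCaZh}, the target $d$ entering only as the final state fed to $\Psi^\varepsilon$, which contributes a harmless term of size $\|\Psi^\varepsilon\|\,d\sqrt{L}\le\|\Psi^\varepsilon\|\sqrt{L}\,\delta_1$. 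All constants ($\|\Psi^\varepsilon\|$, $C_2$, the constant of Lemma \ref{ywx}) are then independent of $d$, and the uniform $\delta_1$ follows. If you want to repair your write-up, this change of linearization point is the missing idea; the remainder of your fixed-point outline (Proposition \ref{wgproblem}, Lemma \ref{ywx}, Remark \ref{mia}) then goes through essentially as you describe.
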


\begin{proof} Let $\delta_{1}\in (0,\epsilon_0)$ be a number to be chosen later. Consider $d \in (0,\delta_{1})$ and $y_0\in L^2(0,L)$ satisfying
$$
	\|y_0\|_{L^2(0,L)}<\delta_1.
$$
For $\varepsilon\in (0,\epsilon_0)$ such that
\begin{align}\label{estimate to epsilon}
C\left(\tau^\frac{1}{2}+\tau^\frac{1}{3}\right)\|\varepsilon\|_{\mathcal{Z}_T}<\delta_1\text{ \ \ and \ \ }C\left(\tau^{\frac{1}{2}}+\tau^{\frac{1}{3}}\right)\|\varepsilon\|_{\mathcal{Z}_T}^2<\delta_1,
\end{align}
where $C>0$ is the positive constant given in Lemma \ref{ywx}, \cite[Proposition 3.6]{CaCaZh} guarantees the existence of a bounded linear control operator
$$
\Psi^\varepsilon:L^2(0,L)\times L^2(0,L)\rightarrow L^2(0,\tau)
$$
such that, for any $u_0,u_\tau \in L^2(0,L)$, the solution $u$ of
\begin{align*}
	\left\{
	\begin{array}{lll}
		u_t+(1+\varepsilon)u_x+u_{xxx}=0,&\text{ in }(0,L)\times (0,\tau),\\
		u_{xx}(0,t)=0,\ 	u_x(L,t)=h(t),\ u_{xx}(L,t)=0,&\text{ in }(0,\tau),\\
		u(x,0)=u_0(x),&\text{ in }(0,L),
	\end{array}
	\right.
\end{align*}
with the control $h=\Psi^\varepsilon(u_0,u_\tau)$ satisfies $u(\cdot, \tau)=u_\tau$.

We will denote, for simplicity, the solution operator $\Lambda_{1+\varepsilon}$ (given in Proposition \ref{wgproblem} and Remark \ref{mia} with $a=1+\varepsilon$) by $\Lambda_\varepsilon$. Observe that, if $y$ is solution for \eqref{ncp} for some control $h$, then $y$ is a solution of
\begin{align*}
	\left\{
	\begin{array}{ll}
		y_t+(1+\varepsilon)y_x+y_{xxx}=-yy_x+\varepsilon y_x,&\ \text{ in }(0,L)\times(0,\tau),\\
		y_{xx}(0,t)=0,\ y_{x}(L,t)=h(t),\ y_{xx}(L,t)=0,&\ \text{ in }(0,\tau),\\
		y(x,0)=y_0(x)&\ \text{ in }(0,L),
	\end{array}
	\right.
\end{align*}
that is,
\begin{align}\label{ex-nonlinear}
y=\Lambda_\varepsilon(y_0,h,-yy_x+\varepsilon y_x)=\Lambda_\varepsilon(y_0,h,0)+\Lambda_\varepsilon(0,0,-yy_x+\varepsilon y_x).
\end{align}

Let $y\in \mathcal{Z}_\tau$ and $h_y\in L^2(0,\tau)$ given by
$$
h_y=\Psi^\varepsilon\big(y_0,d-\Lambda_\varepsilon(0,0,-yy_x+\varepsilon y_x)(\cdot,\tau)\big).
$$
Define the map $\Gamma:\mathcal{Z}_\tau\rightarrow\mathcal{Z}_\tau$ by
\begin{align*}
\Gamma y=\Lambda_\varepsilon(y_0,h_y,0)+\Lambda_\varepsilon(0,0,-yy_x+\varepsilon y_x).
\end{align*}
Note that if $\Gamma$ has a fixed point $y$, then, from the above construction, it follows that $y$ is a solution of \eqref{ncp} with the control $h_y$. Moreover, from \eqref{ex-nonlinear} we have
\begin{align*}
	y=\Lambda_\varepsilon(y_0,h_y,0)+\Lambda_\varepsilon(0,0,-yy_x+\varepsilon y_x)
\end{align*}
so, by definitions of $\Lambda_\varepsilon,\ h_y$ and $\Psi^\varepsilon$, we get that
\begin{align*}
y(\cdot,\tau)&=\Lambda_\varepsilon(y_0,h_y,0)(\cdot,\tau)+\Lambda_\varepsilon(0,0,-yy_x+\varepsilon y_x)(\cdot,\tau)\\
&=d-\Lambda_\varepsilon(0,0,-yy_x+\varepsilon y_x)(\cdot,\tau)+\Lambda_\varepsilon(0,0,-yy_x+\varepsilon y_x)(\cdot,\tau)\\
&=d,
\end{align*}
and our problem would be solved. So we will focus our efforts on showing that $\Gamma$ has a fixed point in a suitable metric space.

To do that, let  $B$ be the set
\begin{align*}
B=\left\{y \in \mathcal{Z}_\tau;\ \|y\|_{\mathcal{Z}_\tau}\leq r\right\},
\end{align*}
with $r>0$ to be chosen later. By \eqref{ex-nonlinear} and Proposition \ref{wgproblem} (together with Remark \ref{mia}) we have, for $y \in B$, that
\begin{equation*}
\begin{split}
\|\Gamma y\|_{\mathcal{Z}_\tau}&\leq \|\Lambda_\varepsilon(y_0,h_y,0)\|_{\mathcal{Z}_\tau}+\|\Lambda_\varepsilon(0,0,-yy_x+\varepsilon y_x)\|_{\mathcal{Z}_\tau}\\
&\leq C_2\left(\|y_0\|_{L^2(0,L)}+\|h_y\|_{L^2(0,\tau)}+\|-yy_x+\varepsilon y_x\|_{L^1(0,\tau;L^2(0,L))}\right).
\end{split}
\end{equation*}
From Lemma \ref{ywx} and Young's inequality, we ensure that
\begin{align*}
\|yy_x-\varepsilon y_x\|_{L^1(0,\tau,L^2(0,L))}&=\|(y-\varepsilon)y_x\|_{L^1(0,\tau,L^2(0,L))}\\&\leq C\left(\tau^\frac{1}{2}+\tau^\frac{1}{3}\right)\|y-\varepsilon\|_{\mathcal{Z}_\tau}\|y\|_{\mathcal{Z}_\tau}\\
&\leq C\left(\tau^\frac{1}{2}+\tau^\frac{1}{3}\right)\frac{1}{2}\left(\|y-\varepsilon\|_{\mathcal{Z}_\tau}^2+\|y\|_{\mathcal{Z}_\tau}^2\right)\\
&\leq C\left(\tau^\frac{1}{2}+\tau^\frac{1}{3}\right)\frac{1}{2}\left(\|y\|_{\mathcal{Z}_\tau}^2+\|y\|_{\mathcal{Z}_\tau}^2+\|\varepsilon\|_{\mathcal{Z}_\tau}^2+\|\varepsilon\|_{\mathcal{Z}_\tau}^2+\|y\|_{\mathcal{Z}_\tau}^2\right).
\end{align*}
Thus,
\begin{align*}
\|yy_x-\varepsilon y_x\|_{L^1(0,\tau,L^2(0,L))}&\leq C\left(\tau^\frac{1}{2}+\tau^\frac{1}{3}\right)\frac{3}{2}\|y\|_{\mathcal{Z}_\tau}^2+C\left(\tau^\frac{1}{2}+\tau^\frac{1}{3}\right)\|\varepsilon\|_{\mathcal{Z}_\tau}^2
\end{align*}
and, by \eqref{estimate to epsilon} we obtain
\begin{align*}
\|yy_x-\varepsilon y_x\|_{L^1(0,\tau,L^2(0,L))}&\leq C\left(\tau^\frac{1}{2}+\tau^\frac{1}{3}\right)\frac{3}{2}\|y\|_{\mathcal{Z}_\tau}^2+\delta_1,
\end{align*}
that is,
\begin{align*}
	\|yy_x-\varepsilon y_x\|_{L^1(0,\tau,L^2(0,L))}&\leq\overline{C}\|y\|_{\mathcal{Z}_\tau}^2+\delta_1,
\end{align*}
where
\begin{align*}
\overline{C}:=\frac{3C}{2}\left(\tau^\frac{1}{2}+\tau^\frac{1}{3}\right).
\end{align*}
In this way
\begin{align*}
\|h_y\|_{L^2(0,\tau)}&=\|\Psi^\varepsilon\big(y_0,d-\Lambda_\varepsilon(0,0,-yy_x+\varepsilon y_x)(\cdot,\tau)\big)\|_{L^2(0,\tau)}\\
&\leq \|\Psi^\varepsilon\|\left(\|y_0\|_{L^2(0,L)}+\|d\|_{L^2(0,L)}+\|\Lambda_\varepsilon(0,0,-yy_x+\varepsilon y_x)(\cdot,\tau)\|_{L^2(0,L)}\right)\\
&\leq \|\Psi^\varepsilon\|\delta_1+\|\Psi^\varepsilon\|d\sqrt{L}+\|\Psi^\varepsilon\|\|\Lambda_\varepsilon(0,0,-yy_x+\varepsilon y_x)\|_{\mathcal{Z}_\tau}\\
&\leq \|\Psi^\varepsilon\|\delta_1+\|\Psi^\varepsilon\|\delta_1\sqrt{L}+\|\Psi^\varepsilon\|C_2\|-yy_x+\varepsilon y_x\|_{L^1(0,\tau;L^2(0,L))}\\
&\leq \|\Psi^\varepsilon\|\delta_1+\|\Psi^\varepsilon\|\delta_1\sqrt{L}+\|\Psi^\varepsilon\|C_2\overline{C}\|y\|_{\mathcal{Z}_\tau}^2+\|\Psi^\varepsilon\|C_2\delta_1\\
&=\left(1+\sqrt{L}+C_2\right)\|\Psi^\varepsilon\|\delta_1+C_2\overline{C}\|\Psi^\varepsilon\|r^2.
\end{align*}
Therefore,

\begin{align*}
\|\Gamma y\|_{\mathcal{Z}_\tau}&\leq
C_2\delta_{1}+C_2\left[\left(1+\sqrt{L}+C_2\right)\|\Psi^\varepsilon\|\delta_1+C_2\overline{C}\|\Psi^\varepsilon\|r^2\right]+C_2\left(\overline{C}r^2+\delta_1\right)\\
&=\left[2C_2+C_2\left(1+\sqrt{L}+C_2\right)\|\Psi^\varepsilon\|\right]\delta_1+\left(C_2^2\|\Psi^\varepsilon\|+C_2\right)\overline{C}r^2.
\end{align*}
Choosing
\begin{align*}
r=2\left[2C_2+C_2\left(1+\sqrt{L}+C_2\right)\|\Psi^\varepsilon\|\right]\delta_1
\end{align*}
and $\delta_1$ is small enough such that
\begin{align}\label{choose of delta}
\left(C_2^2\|\Psi^\varepsilon\|+C_2\right)\overline{C}r<\frac{1}{2},\ \ \ 2\left(C_2^2\|\Psi^\varepsilon\|+C_2\right)r<\frac{1}{2},\ \ \ \left(C_2^2\|\Psi^\varepsilon\|+C_2\right)\delta_{1}<\frac{1}{2},
\end{align}
yields that
\begin{align*}
\|\Gamma\|_{\mathcal{Z}_\tau}\leq \frac{r}{2}+\frac{r}{2}=r \implies \Gamma(B)\subset B.
\end{align*}
Additionally, observe that for $y,w \in B$, Proposition \ref{wgproblem} give us
\begin{align*}
\|\Gamma y-\Gamma w\|_{\mathcal{Z}_\tau}=&\|\Lambda_\varepsilon(0,h_y-h_w,0)+\Lambda_\varepsilon(0,0,-yy_x+ww_x+\varepsilon y_x-\varepsilon w_x)\|_{\mathcal{Z}_\tau}\\
\leq& C_2\|h_y-h_w\|_{L^2(0,\tau)}+C_2\|yy_x-ww_x\|_{L^1(0,\tau;L^2(0,L))}\\
&+C_2\|\varepsilon(y_x-w_x)\|_{L^1(0,\tau;L^2(0,L))}.
\end{align*}
Since
\begin{align*}
h_y-h_w&=\Psi^\varepsilon\left(0,-\Lambda_\varepsilon(0,0,-yy_x+\varepsilon y_x)(\cdot,\tau)+\Lambda_\varepsilon(0,0,-ww_x+\varepsilon w_x)(\cdot,\tau)\right)\\
&=\Psi^\varepsilon\left(0,\Lambda_\varepsilon(yy_x-\varepsilon y_x-ww_x+\varepsilon w_x)(\cdot,\tau)\right),
\end{align*}
we have again from Proposition \ref{wgproblem} that
\begin{align*}
C_2\|h_y-h_w\|_{L^2(0,\tau)}\leq& C_2^2\|\Psi^\varepsilon\|\|yy_x-\varepsilon y_x-ww_x+\varepsilon w_x\|_{L^1(0,\tau;L^2(0,L))}\\
\leq& C_2^2\|\Psi^\varepsilon\|\|yy_x-ww_x\|_{L^1(0,\tau;L^2(0,L))}\\
&+C_2^2\|\Psi^\varepsilon\|\|\varepsilon y_x-\varepsilon w_x\|_{L^1(0,\tau;L^2(0,L))}.
\end{align*}
Putting these two previous inequalities together, we find that 
\begin{align*}
	\|\Gamma y-\Gamma w\|_{\mathcal{Z}_\tau}\leq &\left(C_2^2\|\Psi^\varepsilon\|+C_2\right)\|yy_x-ww_x\|_{L^1(0,\tau;L^2(0,L))}\\
	&+\left(C_2^2\|\Psi^\varepsilon\|+C_2\right)\|\varepsilon(y_x-w_x)\|_{L^1(0,\tau;L^2(0,L))}.
\end{align*}
From Lemmas \ref{ywx} and \ref{yyx}, together with the choices \eqref{estimate to epsilon} and \eqref{choose of delta}, it follows that
\begin{align*}
	\|\Gamma y-\Gamma w\|_{\mathcal{Z}_\tau}\leq& 2\left(C_2^2\|\Psi^\varepsilon\|+C_2\right)r\|y-w\|_{\mathcal{Z}_\tau}\\&+\left(C_2^2\|\Psi^\varepsilon\|+C_2\right)C\left(\tau^\frac{1}{2}+\tau^\frac{1}{3}\right)\|\varepsilon\|_{\mathcal{Z}_\tau}\|y-w\|_{\mathcal{Z}_\tau}\\
	\leq &\left[2\left(C_2^2\|\Psi^\varepsilon\|+C_2\right)r+\left(C_2^2\|\Psi^\varepsilon\|+C_2\right)\delta_{1}\right]\|y-w\|_{\mathcal{Z}_\tau}\\
	\leq &\|y-w\|_{\mathcal{Z}_\tau}
\end{align*}
Therefore, $\Gamma:B\rightarrow B$ is a contraction so that, by Banach's fixed point theorem, $\Gamma$ has a fixed point $y\in B$, concluding the proof.
\end{proof}

\vspace{0.2cm}

The second result of this section ensures the construction of solutions for the system \eqref{ncp} on $[2T/3,T]$ starting in one non-null equilibrium and ending near $0$.

\vspace{0.2cm}
\begin{proposition}
	\label{driving-0}
	There exists $\delta_2>0$ such that, for every $d \in (0,\delta_2)$ and  $y_T\in L^2(0,L)$ satisfying $\|y_T\|_{L^2(0,L)}<\delta_{2},$ there exists $h_2 \in L^2(2T/3,T)$ such that, the solution of \eqref{NKdVN} for $t \in [2T/3,T]$ satisfies
	$$
	y(\cdot, 2T/3)=d\text{ \ \ and \ \ }y(\cdot,T/3)=y_T.
	$$
\end{proposition}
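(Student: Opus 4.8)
The plan is to establish Proposition \ref{driving-0} as the time-reversed counterpart of Proposition \ref{d-0to-c}: we must now drive the constant equilibrium $d$ (at time $2T/3$) down to a target $y_T$ close to $0$ (at time $T$). Since the interval is $[2T/3,T]$, I read the terminal condition as $y(\cdot,T)=y_T$. A tempting shortcut is the space-time reversal $w(x,t):=y(L-x,t_1-t)$, which does send solutions of \eqref{ncp} to solutions of \eqref{ncp}; however, it interchanges the two endpoints, carrying the right-endpoint actuation $y_x(L,\cdot)$ onto a control acting at $x=0$. Because \eqref{ncp} is actuated only at $x=L$, the reversed problem is a genuinely different, left-actuated control system, so the statement does not reduce to Proposition \ref{d-0to-c}. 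I would therefore argue directly, reproducing the scheme of Proposition \ref{d-0to-c} with the roles of initial and terminal data exchanged.

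Concretely, after translating to the interval $[0,\tau]$ with $\tau=T/3$ (legitimate since the equation is autonomous and the constants of Section \ref{sec2} may be taken uniform on $[0,\tau]$), I would fix $\varepsilon\in(0,\epsilon_0)$ small enough to satisfy the analogue of \eqref{estimate to epsilon}, so that by Theorem \ref{c-ap} the system linearized around $\varepsilon$ is exactly controllable and \cite[Proposition 3.6]{CaCaZh} furnishes the bounded control operator $\Psi^\varepsilon$. Writing the solution of \eqref{ncp} as $y=\Lambda_\varepsilon(d,h,0)+\Lambda_\varepsilon(0,0,-yy_x+\varepsilon y_x)$, now with initial datum the constant $d$, I would set
\[
h_y=\Psi^\varepsilon\big(d,\;y_T-\Lambda_\varepsilon(0,0,-yy_x+\varepsilon y_x)(\cdot,\tau)\big),
\qquad
\Gamma y=\Lambda_\varepsilon(d,h_y,0)+\Lambda_\varepsilon(0,0,-yy_x+\varepsilon y_x).
\]
Exactly as before, the defining property of $\Psi^\varepsilon$ forces any fixed point $y$ of $\Gamma$ to solve \eqref{ncp} with $y(\cdot,0)=d$ and $y(\cdot,\tau)=y_T$, which after undoing the translation yields $y(\cdot,2T/3)=d$ and $y(\cdot,T)=y_T$.

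It then remains to verify that $\Gamma$ maps a ball $B=\{y\in\mathcal{Z}_\tau:\|y\|_{\mathcal{Z}_\tau}\le r\}$ into itself and is a contraction there. Following Proposition \ref{d-0to-c} essentially verbatim, I would bound $\|\Gamma y\|_{\mathcal{Z}_\tau}$ by Proposition \ref{wgproblem}, control the source $-yy_x+\varepsilon y_x$ by Lemma \ref{ywx}, Young's inequality and the smallness \eqref{estimate to epsilon} (giving, with $\overline{C}$ as in Proposition \ref{d-0to-c}, $\|-yy_x+\varepsilon y_x\|_{L^1(0,\tau;L^2(0,L))}\le\overline{C}\|y\|_{\mathcal{Z}_\tau}^2+\delta_2$), and control $\|h_y\|_{L^2}$ through the bound on $\|\Psi^\varepsilon\|$ together with $\|d\|_{L^2(0,L)}=d\sqrt{L}\le\delta_2\sqrt{L}$ and $\|y_T\|_{L^2(0,L)}<\delta_2$; the Lipschitz estimate for $\Gamma y-\Gamma w$ is the same computation, using Lemmas \ref{ywx} and \ref{yyx}. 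Choosing $r$ proportional to $\delta_2$ and then $\delta_2$ small (after $\varepsilon$ has been fixed) closes both $\Gamma(B)\subseteq B$ and the contraction, so Banach's theorem supplies $y$. The only genuinely delicate point, and the reason for imitating Proposition \ref{d-0to-c} rather than merely invoking the local controllability of \eqref{ncp} around the constant $d$ (Theorem \ref{control} with $c=d$, valid since $L\notin\mathcal{R}_d$ by Theorem \ref{c-ap}), is uniformity in $d$: the radius of the controllable neighbourhood in Theorem \ref{control} may shrink to $0$ as $d\to0^+$, since $L$ is critical precisely at $d=0$, whereas fixing the single linearization parameter $\varepsilon$ keeps $\|\Psi^\varepsilon\|$ and the constants $C_2,\overline{C}$ independent of $d$, so a single $\delta_2$ serves all $d\in(0,\delta_2)$.
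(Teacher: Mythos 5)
Your proposal is correct and follows essentially the same route as the paper: the authors also fix a single linearization parameter $\varepsilon$ independent of $d$, define $h_z=\Psi^\varepsilon\big(d,\,y_T-\Lambda_\varepsilon(0,0,-zz_x+\varepsilon z_x)(\cdot,\tau)\big)$ and the same map $\Gamma$, run the identical ball-invariance and contraction estimates on $[0,\tau]$, and then translate in time to $[2T/3,T]$. Your reading of the terminal condition as $y(\cdot,T)=y_T$ (correcting the typo in the statement) and your remark on why uniformity in $d$ forces this construction are both consistent with the paper.
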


\begin{proof}
Let $\delta_{2}\in (0,\epsilon_0)$ be a number to be chosen later. Consider $d \in (0,\delta_{2})$ and $y_T\in L^2(0,L)$ satisfying
$$
	\|y_T\|_{L^2(0,L)}<\delta_2.$$Let $\varepsilon\in (0,\epsilon_0)$ be such that
\begin{align}\label{second estimate to epsilon}
	C\left(\tau^\frac{1}{2}+\tau^\frac{1}{3}\right)\|\varepsilon\|_{\mathcal{Z}_T}<\delta_2\text{ \ \ and \ \ }C\left(\tau^{\frac{1}{2}}+\tau^{\frac{1}{3}}\right)\|\varepsilon\|_{\mathcal{Z}_T}^2<\delta_2,
\end{align}
where $C>0$ is the positive constant given in Lemma \ref{ywx}. If $z$ is a solution to the problem
	\begin{align}\label{non-d}
	\left\{
	\begin{array}{ll}
		z_t+z_x+z_{xxx}+zz_x=0,&\ \text{ in }(0,L)\times(0,\tau),\\
		z_{xx}(0,t)=0,\ z_{x}(L,t)=h(t),\ z_{xx}(L,t)=0&\ \text{ in }(0,\tau),\\
		z(x,0)=d,&\ \text{ in }(0,L),
	\end{array}
	\right.
\end{align}
then $z$ is a solution of
	\begin{align*}
		\left\{
		\begin{array}{ll}
			z_t+(1+\varepsilon)z_x+z_{xxx}=-zz_x+\varepsilon z_x,&\ \text{ in }(0,L)\times(0,\tau),\\
			z_{xx}(0,t)=0,\ z_{x}(L,t)=h(t),\ z_{xx}(L,t)=0,&\ \text{ in }(0,\tau),\\
			z(x,0)=d,&\ \text{ in }(0,L),
		\end{array}
		\right.
\end{align*}
that is,
	\begin{align}\label{ex-d}
		z=\Lambda_\varepsilon(d,h,-zz_x+\varepsilon z_x)=\Lambda_\varepsilon(d,h,0)+\Lambda_\varepsilon(0,0,-zz_x+\varepsilon z_x).
	\end{align}
	
	Given $z\in \mathcal{Z}_\tau$, let $h_z\in L^2(0,\tau)$ defined by
	$$
	h_z=\Psi^\varepsilon\big(d,y_T-\Lambda_\varepsilon(0,0,-zz_x+\varepsilon z_x)(\cdot,\tau)\big).
	$$
Now, consider the map $\Gamma:\mathcal{Z}_\tau\rightarrow\mathcal{Z}_\tau$ given by
	\begin{align*}
		\Gamma z=\Lambda_\varepsilon(d,h_z,0)+\Lambda_\varepsilon(0,0,-zz_x+\varepsilon z_x).
	\end{align*}
Once again, if $\Gamma$ has a fixed point $z$, from the above construction, it follows that $z$ is a solution of \eqref{non-d} with the control $h_z$. Moreover, from \eqref{ex-d} we have
	\begin{align*}
		z=\Lambda_\varepsilon(d,h_z,0)+\Lambda_\varepsilon(0,0,-zz_x+\varepsilon z_x)
	\end{align*}
	so, by definitions of $\Lambda_\varepsilon,\ h_z$ and $\Psi^\varepsilon$, it follows that
	\begin{align*}
	z(\cdot,0)&=\Lambda_\varepsilon(d,h_z,0)(\cdot,0)+\Lambda_\varepsilon(0,0,-zz_x+\varepsilon z_x)(\cdot,0)=d+0=d
	\end{align*}
and
	\begin{align*}
		z(\cdot,\tau)&=\Lambda_\varepsilon(d,h_z,0)(\cdot,\tau)+\Lambda_\varepsilon(0,0,-zz_x+\varepsilon z_x)(\cdot,\tau)\\
		&=y_T-\Lambda_\varepsilon(0,0,-zz_x+\varepsilon z_x)(\cdot,\tau)+\Lambda_\varepsilon(0,0,-zz_x+\varepsilon z_x)(\cdot,\tau)=y_T.
	\end{align*}
	Hence our issue would be solved defining $y:[0,L]\times [2T/3,T]\rightarrow\mathbb{R}$ by
	$$
	y(x,t)=z(x,t-2T/3).
	$$
	
	Now, our focus is to show that $\Gamma$ has a fixed point in a suitable metric space. To do that, consider the set $B$ given by
	\begin{align*}
		B=\left\{z \in \mathcal{Z}_\tau;\ \|z\|_{\mathcal{Z}_\tau}\leq r\right\},
	\end{align*}
	with $r>0$ to be chosen later. By \eqref{ex-d} and Proposition \ref{wgproblem} (together with Remark \ref{mia}) we have, for $z \in B$, that
	\begin{equation*}
		\begin{split}
			\|\Gamma z\|_{\mathcal{Z}_\tau}&\leq \|\Lambda_\varepsilon(d,h_z,0)\|_{\mathcal{Z}_\tau}+\|\Lambda_\varepsilon(0,0,-zz_x+\varepsilon z_x)\|_{\mathcal{Z}_\tau}\\
			&\leq C_2\left(\|d\|_{L^2(0,L)}+\|h_z\|_{L^2(0,\tau)}+\|-zz_x+\varepsilon z_x\|_{L^1(0,\tau;L^2(0,L))}\right).
		\end{split}
	\end{equation*}
As in the proof of the Proposition \ref{d-0to-c},
	\begin{align*}
		\|zz_x-\varepsilon z_x\|_{L^1(0,\tau,L^2(0,L))}&\leq\overline{C}\|z\|_{\mathcal{Z}_\tau}^2+\delta_2.
	\end{align*}
Moreover, 
	\begin{align*}
		\|h_z\|_{L^2(0,\tau)}&=\|\Psi^\varepsilon\big(d,y_T-\Lambda_\varepsilon(0,0,-zz_x+\varepsilon z_x)(\cdot,\tau)\big)\|_{L^2(0,\tau)}\\
		&\leq \|\Psi^\varepsilon\|\left(\|d\|_{L^2(0,L)}+\|y_T\|_{L^2(0,L)}+\|\Lambda_\varepsilon(0,0,-zz_x+\varepsilon z_x)(\cdot,\tau)\|_{L^2(0,L)}\right)\\
		&\leq \|\Psi^\varepsilon\|d\sqrt{L}+\|\Psi^\varepsilon\|\delta_2+\|\Psi^\varepsilon\|\|\Lambda_\varepsilon(0,0,-zz_x+\varepsilon z_x)\|_{\mathcal{Z}_\tau}\\
		&\leq \|\Psi^\varepsilon\|\delta_2+\|\Psi^\varepsilon\|\delta_2\sqrt{L}+\|\Psi^\varepsilon\|C_2\|-zz_x+\varepsilon z_x\|_{L^1(0,\tau;L^2(0,L))}\\
		&\leq \|\Psi^\varepsilon\|\delta_2+\|\Psi^\varepsilon\|\delta_2\sqrt{L}+\|\Psi^\varepsilon\|C_2\overline{C}\|z\|_{\mathcal{Z}_\tau}^2+\|\Psi^\varepsilon\|C_2\delta_2\\
		&=\left(1+\sqrt{L}+C_2\right)\|\Psi^\varepsilon\|\delta_2+C_2\overline{C}\|\Psi^\varepsilon\|r^2.
	\end{align*}
	Therefore,
	
	\begin{align*}
		\|\Gamma z\|_{\mathcal{Z}_\tau}&\leq
		C_2\delta_{2}\sqrt{L}+C_2\left[\left(1+\sqrt{L}+C_2\right)\|\Psi^\varepsilon\|\delta_2+C_2\overline{C}\|\Psi^\varepsilon\|r^2\right]+C_2\left(\overline{C}r^2+\delta_2\right)\\
		&=\left[C_2\sqrt{L}+C_2\left(1+\sqrt{L}+C_2\right)\|\Psi^\varepsilon\|+C_2\right]\delta_2+\left(C_2^2\|\Psi^\varepsilon\|+C_2\right)\overline{C}r^2.
	\end{align*}
	Choosing
	\begin{align*}
		r=2\left[C_2\sqrt{L}+C_2\left(1+\sqrt{L}+C_2\right)\|\Psi^\varepsilon\|+C_2\right]\delta_2
	\end{align*}
	and $\delta_2$ is small enough such that
	\begin{align}\label{choose of delta2}
		\left(C_2^2\|\Psi^\varepsilon\|+C_2\right)\overline{C}r<\frac{1}{2},\ \ \ 2\left(C_2^2\|\Psi^\varepsilon\|+C_2\right)r<\frac{1}{2},\ \ \ \left(C_2^2\|\Psi^\varepsilon\|+C_2\right)\delta_{1}<\frac{1}{2},
	\end{align}
we get that
	\begin{align*}
		\|\Gamma z\|_{\mathcal{Z}_\tau}\leq \frac{r}{2}+\frac{r}{2}=r \implies \Gamma(B)\subset B
	\end{align*}
Furthermore, observe that for $z,w \in B$, Proposition \ref{wgproblem} give us
	\begin{align*}
		\|\Gamma z-\Gamma w\|_{\mathcal{Z}_\tau}=&\|\Lambda_\varepsilon(0,h_z-h_w,0)+\Lambda_\varepsilon(0,0,-zz_x+ww_x+\varepsilon z_x-\varepsilon w_x)\|_{\mathcal{Z}_\tau}\\
		\leq& C_2\|h_z-h_w\|_{L^2(0,\tau)}+C_2\|zz_x-ww_x\|_{L^1(0,\tau;L^2(0,L))}\\
		&+C_2\|\varepsilon(z_x-w_x)\|_{L^1(0,\tau;L^2(0,L))}.
	\end{align*}
	Since
	\begin{align*}
		h_z-h_w&=\Psi^\varepsilon\left(0,-\Lambda_\varepsilon(0,0,-zz_x+\varepsilon z_x)(\cdot,\tau)+\Lambda_\varepsilon(0,0,-ww_x+\varepsilon w_x)(\cdot,\tau)\right)\\
		&=\Psi^\varepsilon\left(0,\Lambda_\varepsilon(zz_x-\varepsilon z_x-ww_x+\varepsilon w_x)(\cdot,\tau)\right),
	\end{align*}
	we have, again from Proposition \ref{wgproblem}, that
	\begin{align*}
		C_2\|h_z-h_w\|_{L^2(0,\tau)}\leq& C_2^2\|\Psi^\varepsilon\|\|zz_x-\varepsilon z_x-ww_x+\varepsilon w_x\|_{L^1(0,\tau;L^2(0,L))}\\
		\leq& C_2^2\|\Psi^\varepsilon\|\|zz_x-ww_x\|_{L^1(0,\tau;L^2(0,L))}\\
		&+C_2^2\|\Psi^\varepsilon\|\|\varepsilon z_x-\varepsilon w_x\|_{L^1(0,\tau;L^2(0,L))}.
	\end{align*}
	Then,
	\begin{align*}
		\|\Gamma z-\Gamma w\|_{\mathcal{Z}_\tau}\leq& \left(C_2^2\|\Psi^\varepsilon\|+C_2\right)\|zz_x-ww_x\|_{L^1(0,\tau;L^2(0,L))}\\
		&+\left(C_2^2\|\Psi^\varepsilon\|+C_2\right)\|\varepsilon(z_x-w_x)\|_{L^1(0,\tau;L^2(0,L))}.
	\end{align*}
	From Lemmas \ref{ywx} and \ref{yyx}, together with \eqref{second estimate to epsilon} and \eqref{choose of delta2}, it follows that
	\begin{align*}
		\|\Gamma z-\Gamma w\|_{\mathcal{Z}_\tau}\leq& 2\left(C_2^2\|\Psi^\varepsilon\|+C_2\right)r\|z-w\|_{\mathcal{Z}_\tau}\\
		&+\left(C_2^2\|\Psi^\varepsilon\|+C_2\right)C\left(\tau^\frac{1}{2}+\tau^\frac{1}{3}\right)\|\varepsilon\|_{\mathcal{Z}_\tau}\|z-w\|_{\mathcal{Z}_\tau}\\
		\leq& \left[2\left(C_2^2\|\Psi^\varepsilon\|+C_2\right)r+\left(C_2^2\|\Psi^\varepsilon\|+C_2\right)\delta_{2}\right]\|z-w\|_{\mathcal{Z}_\tau}\\
		\leq& \|z-w\|_{\mathcal{Z}_\tau}.
	\end{align*}
	Therefore, $\Gamma:B\rightarrow B$ is a contraction so that, by Banach's fixed point theorem, $\Gamma$ has a fixed point $z\in B$, which concludes our proof.
\end{proof}

\begin{remark}
It is important to note that Propositions \ref{d-0to-c} and \ref{driving-0} guarantee that $\|\Psi^\varepsilon\|$ can be chosen uniformly bounded for sufficiently small $\varepsilon$. This uniform bound is crucial to ensure the existence of a fixed point in both theorems.
\end{remark}

\subsection{Controllability on $\mathcal{R}_c$}

We are in a position to prove Theorems \ref{c-length} and \ref{c-length-a}. For the sake of simplicity, we will give the proof of the case $L\in\mathcal{R}_0$ (Theorem \ref{c-length}), and the case $L\in\mathcal{R}_c$ (Theorem \ref{c-length-a}) follows similarly.

\vspace{0.2cm}

\begin{proof}\textbf{(Proof of Theorem \ref{c-length}.)}  Let $\delta_1$ and $\delta_2$ be positive real numbers given in Propositions \ref{d-0to-c} and \ref{driving-0}, respectively. Define $\delta:=\min\{\delta_1,\delta_2\}$ and consider $d\in(0,\delta)$ and $y_0,y_T\in L^2(0,L)$ such that
\begin{align*}
\|y_0\|_{L^2(0,L)},\|y_T\|_{L^2(0,L)}<\delta.
\end{align*}
From Proposition \ref{d-0to-c} there exists $h_1\in L^2(0,T/3)$ such that, the solution $y_1\in \mathcal{Z}_{T/3}$ of
\begin{align*}
	\left\{
	\begin{array}{ll}
		y_t+y_x+y_{xxx}+yy_x=0,&\ \text{ in }(0,L)\times(0,T/3),\\
		y_{xx}(0,t)=0,\ y_{x}(L,t)=h_1(t),\ y_{xx}(L,t)=0,&\ \text{ in }(0,T/3),\\
		y(x,0)=y_0(x),&\ \text{ in }(0,L),
	\end{array}
	\right.
\end{align*}
satisfies
\begin{align*}
y_1(x,T/3)=d.
\end{align*}
On the other hand, thanks to the Proposition \ref{driving-0}, there exists $h_2\in L^2(2T/3,T)$ such that, the solution $y_2\in \mathcal{Z}_{2T/3,T}$ of
\begin{align*}
	\left\{
	\begin{array}{ll}
		y_t+y_x+y_{xxx}+yy_x=0,&\ \text{ in }(0,L)\times(2T/3,T),\\
		y_{xx}(0,t)=0,\ y_{x}(L,t)=h_2(t),\ y_{xx}(L,t)=0,&\ \text{ in }(2T/3,T),\\
		y(x,2T/3)=d,&\ \text{ in }(0,L),
	\end{array}
	\right.
\end{align*}
satisfies
\begin{align*}
y_2(x,T)=y_T.
\end{align*}
Defining $y:[0,L]\times [0,T]\rightarrow\mathbb{R}$ by
\begin{align*}
y=\left\{
\begin{array}{ll}
y_1,&\text{ in }[0,T/3],\\
d,&\text{ in }[T/3,2T/3],\\
y_2,&\text{ in }[2T/3,T],
\end{array}
\right.
\end{align*}
we have that $y\in \mathcal{Z}_T$ and $y$ is solution of \eqref{ncp} driving $y_0$ to $y_T$ at time $T$, showing that the system \eqref{ncp} is exactly controllable, and the proof is completed. 
\end{proof}

\section{Conclusion and open issues}\label{sec4}

In this work, we established the exact boundary controllability of the nonlinear Korteweg--de Vries equation under Neumann boundary conditions in the critical length case. By combining the return method of Coron \cite{CoronBook} with a suitable fixed-point argument, we extended previous results in the literature that were restricted to the subcritical regime. Our analysis shows that, for small perturbations of the steady state $u \equiv c$, the system is exactly controllable in $L^{2}(0,L)$ when $L \in \mathcal{R}_c$. This completes the controllability theory for the Neumann case and opens perspectives for future research on control and several important conclusions, which we will mention now. 

\subsection{Control cost and small-time behavior} 

Observe that Theorems \ref{c-length} and \ref{c-length-a}, and consequently Theorem \ref{c-ap}, guarantee controllability in small time. This is in contrast with the setting considered in \cite{CaCaZh,Rosier}, where the control cost cannot be directly characterized. In our approach, small-time controllability follows from Propositions \ref{d-0to-c} and \ref{driving-0}, specifically from the assumptions on $\tau$ given in \eqref{estimate to epsilon} and \eqref{second estimate to epsilon}.

This naturally leads to the question of characterizing the constant in the observability inequality associated with the control problem \eqref{NKdVN}. Moreover, we expect the control $h$ to blow up in the $L^{2}$--norm as $T \to 0^{+}$, since the control is bounded by the constant
$$\overline{C}:=\frac{3C}{2}\left(T^{\frac{1}{2}}+T^{\frac{1}{3}}\right).$$
Both questions remain open and provide interesting directions for future research on the control problem.

\subsection{Neumann \textit{versus} Dirichlet/mixed controls}

In this work, we investigated the Korteweg–de Vries (KdV) equation on a bounded domain under Neumann boundary conditions. In the classical Hilbert Uniqueness Method (HUM), the main difficulty lies in the fact that the boundary conditions for the adjoint system differ from those of the original system. For this reason, the return method cannot be applied directly, unlike approaches based on series expansions as in the works of Crépeau, Cerpa, Coron, and others \cite{CoCre,cerpa2}. To overcome this issue, we adapted Glass’s idea \cite{Glass} and employed a fixed-point argument applied to a specifically constructed trajectory, which is, in general, not unique.

Several open problems remain concerning the KdV equation with Neumann boundary conditions. For instance, it is still unclear how to use power series expansions to obtain similar results as in \cite{CoCre} or how to define a manifold (a subset of $L^2(0,L)$) associated with the set $\mathcal{R}_c$ to characterize all admissible trajectories in the critical length regime, see for instance the references \cite{cerpa1,cerpa2}. In this sense, the present work not only extends the existing literature but also highlights important directions for future research.
\color{black}

\section*{Acknowledgment}  The authors are grateful to the referee for the careful reading of this paper and their valuable suggestions and comments.  Capistrano-Filho was supported by CAPES grant number 88881.311964/2018-01, COFECUB/CAPES grant number  8887.879175/2023-00, CNPq grants numbers  307808/2021-1 and 401003/2022-1.  Da Silva acknowledges support from CNPq.  This work is part of the Ph.D. thesis of da Silva at the Department of Mathematics of the Universidade Federal de Pernambuco.

\end{document}